\documentclass[12pt]{amsart}
\usepackage{amsmath,amssymb,latexsym,cancel,rotating}

\usepackage{tikz-cd}

\usetikzlibrary{cd}

\usepackage{setspace}
\usepackage{dcpic,pictexwd}
\usepackage[all]{xy}
\usepackage[pdftex]{hyperref}
\usepackage{bbm}

\makeatletter
\@namedef{subjclassname@2020}{%
  \textup{2020} Mathematics Subject Classification}
\makeatother

\textwidth15.1cm \textheight21cm \headheight12pt
\oddsidemargin.4cm \evensidemargin.4cm \topmargin0.5cm

\addtolength{\marginparwidth}{-13mm}

\numberwithin{equation}{section}

\newtheorem{theorem}{Theorem}[section]
\newtheorem{proposition}[theorem]{Proposition}

\newtheorem{lemma}[theorem]{Lemma}

\theoremstyle{definition}

\newtheorem{remark}[theorem]{Remark}

\newtheorem{definition}[theorem]{Definition}

\input xy
\xyoption{poly}
\xyoption{2cell}
\xyoption{all}

\def\Ext{\text{Ext}}
\def\Hom{\text{Hom}}
\def\End{\text{End}}
\def\ker{\text{Ker}}

\def\dim{\text{dim}}
\def\rad{\text{rad}}

\def\soc{\text{Soc}}

\def\ZZ{\mathbb{Z}}

\def\Acal{\mathcal{A}}

\def\Fcal{\mathcal{F}}

\def\dim{\text{dim}}

\def\Hom{\text{Hom}}
\def\Ext{\text{Ext}}

\def\half{\frac{1}{2}}

\renewcommand{\eqref}[1]{{\rm (\ref{#1})}}

\begin{document}

\title[Kronecker quantum cluster algebra with principal coefficients]
{Recursive formulas for the Kronecker quantum cluster algebra with principal coefficients}

\author{Ming Ding, Fan Xu and Xueqing Chen}

\address{School of Mathematics and Information Science\\
Guangzhou University, Guangzhou, P. R. China}
\email{m-ding04@mails.tsinghua.edu.cn (M.Ding)}
\address{Department of Mathematical Sciences\\
Tsinghua University\\
Beijing 100084, P. R. China} \email{fanxu@mail.tsinghua.edu.cn (F.Xu)}
\address{Department of Mathematics,
 University of Wisconsin-Whitewater\\
800 W. Main Street, Whitewater, WI.53190. USA}
\email{chenx@uww.edu (X.Chen)}

\keywords{quantum cluster algebra, cluster variable, positive basis.}

\subjclass[2020]{13F60, 16G20, 18E30} 


\maketitle

\begin{abstract} We use the quantum version of Chebyshev polynomials  to explicitly construct the recursive formulas for the Kronecker quantum cluster algebra  with principal coefficients.  
As a byproduct, we obtain two  bar-invariant positive $\mathbb{ZP}$-bases with one being the atomic basis.
\end{abstract}


\section{Introduction}
Cluster algebras were invented by Fomin and Zelevinsky  \cite{ca1}, and their quantum analogues (called quantum cluster algebras) were introduced  by Berenstein and Zelevinsky  \cite{berzel}.  A  (quantum)  cluster algebra   is generated by distinguished generators called (quantum) cluster variables which are grouped into overlapping subsets called clusters.  The first important result is the Laurent phenomenon which asserts that  any (quantum)  cluster algebra element is a Laurent polynomial in the cluster variables in any cluster. A (quantum)  cluster algebra element is said to be positive if the Laurent polynomial has non-negative coefficients.

In order to obtain more structural results in cluster theory, one needs to  construct cluster multiplication formulas. For the classical cluster algebras of acyclic quivers, Sherman and Zelevinsky \cite{SZ} firstly provided the cluster multiplication formulas in rank 2 cluster algebras of finite and affine types. This result was generalized to rank $3$ cluster algebra of affine type $A_{2}^{(1)}$ by Cerulli \cite{CI-1}.  Caldero and Keller \cite{ck}  constructed the cluster multiplication formulas  between two generalized cluster variables for simply laced Dynkin quivers, which were generalized to affine types in~\cite{Hubery} and to acyclic types in~\cite{XX,X}.  In the quantum setting, Ding and Xu \cite{dx} firstly gave the cluster multiplication formulas of the quantum  cluster algebra of type  $A_{1}^{(1)}$ without coefficients, which were generalized to  type $A_{2}^{(2)}$ by Bai, Chen, Ding and Xu in~\cite{BCDX}.
Recently, Chen, Ding and Zhang \cite{cdz} constructed the cluster multiplication formulas  in the acyclic quantum cluster algebras with arbitrary coefficients through some quotients of derived Hall algebras of acyclic valued quivers. Note that, for general cases, although the  structure constants  deduced from cluster multiplication formulas have the homological interpretations, it is still difficult to explicitly compute them in these formulas.

Cluster multiplication formulas play a very important role in  constructing ``good" bases of cluster algebras.  A basis of a cluster algebra is said to be positive if its structure constants are positive,  and to be atomic (or canonical) if  positive elements are exactly linear combinations of  basis elements with non-negative coefficients. Note that the atomic basis must be a positive basis. 
It attracts a lot of attention to find ``good" bases such as the atomic basis or canonical basis of a cluster algebra. Sherman and Zelevinsky~\cite{SZ} introduced and constructed atomic bases in rank 2 cluster algebras of finite and affine types, which were originally called canonical bases. Cerulli constructed the atomic basis in the rank $3$ cluster algebra of affine type $A_{2}^{(1)}$ in~\cite{CI-1}. He also proved the coincidence of the atomic basis and the set of cluster monomials for cluster algebras of finite type~\cite{CI-2}. For non-finite type cluster algebras, the set of cluster monomials does not form a basis even it is linearly independent. Dupont and Thomas~\cite{DTh} constructed the atomic bases of cluster algebras of affine type $\widetilde A$. They also provided a new proof of Cerulli's result for cluster algebras of type $A$.
In the quantum setting, the atomic bases of the quantum cluster algebras  of  type $A_{1}^{(1)}$ (or $\widetilde{A}_{1,1}$), $A_{2}^{(2)}$ and $\widetilde{A}_{2n-1,1}$  without coefficients were constructed in \cite{dx, BCDX, dxc} by using the explicit cluster multiplication formulas.  However,  most of  quantum cluster algebras must have coefficients involved. It is natural to ask whether there exists an explicit treatment of  the atomic bases for the acyclic quantum cluster algebras with coefficients by using representation theory of quivers.  Sherman and Zelevinsky \cite{SZ} initiated the use of Chebyshev polynomials to construct the atomic bases of rank $2$ cluster algebras of affine type. In order to obtain  atomic bases of cluster algebras with principle coefficients, Dupont~\cite{Dup0}  introduced the deformed Chebyshev polynomials and conjectured that  the deformed Chebyshev polynomials have interactions with atomic bases or canonically positive bases in affine cluster algebras. He proved this conjecture for cluster algebra of affine type $A_{1}^{(1)}$.  Note that for the $A_{2}^{(1)}$ case, Cerulli~\cite{CI-1}  already used the deformed Chebyshev polynomials to prove the existence of the atomic basis.

As the Chebyshev polynomials  are used to construct the positive bases in classical cluster algebras, it is important to study them in quantized cases. In this paper, we modify the definition of Chebyshev polynomials which can be viewed as the quantum analogue of those in \cite{CI-1, Dup0}.  Then we explicitly construct the recursive formulas for the Kronecker quantum cluster algebra with principal coefficients based on the representation theory of the Kronecker quiver.  As a direct corollary, we obtain two  bar-invariant positive $\mathbb{ZP}$-bases with  one being the atomic basis. We expect to apply results and methods used in this paper in the future studies for all affine types.
  
  The paper is organized as follows:  in Section 2, we collect basic concepts of quantum cluster algebras and quantum cluster characters and recall some multiplication formulas; Section 3 provides detailed calculations of recursive formulas in the Kronecker quantum cluster algebra  with principle coefficients; in Section 4, we obtain an explicit  treatment of  two bar-invariant positive $\mathbb{ZP}$-bases in terms of quiver representations.

\section{Quantum cluster algebra and quantum cluster character}
We recall some basic features of quantum cluster algebras and quantum cluster characters. 
\subsection{Quantum cluster algebra} 
 Let $\Lambda: \mathbb{Z}^{m} \times \mathbb{Z}^{m} \rightarrow \mathbb{Z}$ be a skew-symmetric bilinear form on $ \mathbb{Z}^{m}$. The  natural basis  of  $\mathbb{Z}^{m}$ is denoted by $\{{\bf e}_1,\cdots,{\bf e}_m\}$.  Denote by  $\ZZ[\mathfrak{q}^{\pm\frac{1}{2}}]$ be the ring of integer Laurent polynomials of an indeterminate $\mathfrak{q}$. 
 The  quantum torus  $\mathcal{T}_{\mathfrak{q}}$ associated to $\Lambda$ as the $\ZZ[\mathfrak{q}^{\pm\frac{1}{2}}]$-algebra is freely generated by the set $\{X^{\bf e}: {\bf e}\in \mathbb{Z}^{m}\}$ with the following
twisted product:
$$X^{\bf e}\cdot X^{\bf f}=\mathfrak{q}^{\Lambda({\bf e},{\bf f})/2}X^{{\bf e+f}}\,\,\, \,\,\,\, \text{ for any }\bf{e}, \bf{f} \in  \mathbb{Z}^m.$$
In what follows, we omit the twisted product $\cdot$ for the multiplication in algebras. The skew-field of fractions for $\mathcal{T}_{\mathfrak{q}}$ is  denoted by $\mathcal{F}_{\mathfrak{q}}$. It will cause no confusion if we use the same letter $\Lambda$ to designate the $m\times m$ skew-symmetric integer matrix associated to the bilinear form $\Lambda$.

Let $\widetilde{B}=(b_{ij})$ be an $m\times n$ integer matrix with $n\le m$ and $\widetilde{B}^{tr}$ the transpose of  $\widetilde{B}$.  The {\em principle} part of $\widetilde{B}$ is the upper $n\times n$ submatrix of $\widetilde{B}$ denoted by $B$.  We call  the pair
$(\Lambda, \widetilde{B})$ {\em compatible} if $\widetilde{B}^{tr}\Lambda=(D|0)$ for some diagonal matrix
$D$ with positive entries. An {\em initial  quantum seed} $(\Lambda, \widetilde{B}, X)$ of $\mathcal{F}_{\mathfrak{q}}$ 
  consists of a compatible pair $(\Lambda,\widetilde{B})$ and the set  $X=\{X_1,\cdots,X_m\}$ with  $X_i:=X^{{\bf e_i}}$ for $1\leq i\leq m$. For any $1\leq k\leq n$, the mutation $\mu_k$ of the quantum seed $(\Lambda, \widetilde{B}, X)$ in direction $k$  is the new quantum seed $\mu_k (\Lambda, \widetilde{B}, X) :=(\Lambda',\widetilde{B}',X')$ given as follows:
\begin{enumerate}
\item $\Lambda'=E^{tr}\Lambda E$, where the
$m\times m$ matrix $E=(e_{ij})$ is given by
\[e_{ij}=\begin{cases}
\delta_{ij} & \text{if $j\ne k$;}\\
-1 & \text{if $i=j=k$;}\\
[-b_{ik}]_{+} & \text{if $i\ne j = k$.}
\end{cases}
\]
\item \ $\widetilde{B}'=(b'_{ij})$ with
\[b'_{ij}=\begin{cases}
-b_{ij} & \text{if $i=k$ or $j=k$;}\\
b_{ij}+[b_{ik}]_{+}b_{kj} +b_{ik}[-b_{kj}]_{+}& \text{otherwise.}
\end{cases}
\]
\item $X'=\{X'_1,\cdots,X'_m\}$ with
\[ X_i'=\begin{cases}
X_i & \text{ if }    i\neq k; \nonumber\\
X^{\sum_{1\leq j\leq m}[b_{jk}]_{+} {\bf e}_j-{\bf e}_k}+X^{\sum_{1\leq j\leq m}[-b_{jk}]_{+} {\bf e}_j -{\bf e}_k} & \text{ if }    i=k,\nonumber
\end{cases}
\]
\end{enumerate}  
where $[a]_{+}:=\max\{0,a\}$  for $a \in \mathbb{Z}$.  
  
Note that the mutation is an involution, i.e., $\mu_k^2 (\Lambda, \widetilde{B}, X) =(\Lambda,\widetilde{B},X)$. Thus we have an equivalence relation: the quantum seed  $(\Lambda, \widetilde{B}, X)$ is
 {\em mutation-equivalent} to $(\Lambda', \widetilde{B}', X')$, denoted by $(\Lambda, \widetilde{B}, X)\sim(\Lambda', \widetilde{B}', X')$, if they can be obtained from each other
by finite sequence of mutations. In the context of a quantum seed $(\Lambda', \widetilde{B}', X')$, we call the set $\{X'_i~|1\leq i\leq n\}$ the {\em cluster} of the seed and call the elements $X'_i$
the {\em  cluster variables}. The  {\em  cluster monomials} are the elements $X'^{\bf e}$ with $\bf e \in (\mathbb{Z}_{\geq 0})^{m}$  and 
$e_i=0$ for $n+1\leq i\leq m$. The
elements in the set $\mathbb{P}:=\{X_i~|~n+1\leq i\leq m\}$ are called the {\em coefficients}. Let 
$\ZZ\mathbb{P}$ be the ring of  Laurent polynomials in the elements of $\mathbb{P}$ with coefficients in $\ZZ[\mathfrak{q}^{\pm\frac{1}{2}}]$. The
{\em quantum cluster algebra}
$\mathcal{A}_{\mathfrak{q}}(\Lambda,\widetilde{B})$ is defined as the
$\ZZ\mathbb{P}$-subalgebra of $\mathcal{F}_{\mathfrak{q}}$ generated by all
 cluster variables.
On $\mathcal{T}_{\mathfrak{q}}$, we can define the $\mathbb{Z}$-linear bar-involution: 
$$\overline{\mathfrak{q}^{\frac{r}{2}}X^{\mathbf{e}}}=\mathfrak{q}^{-\frac{r}{2}}X^{\mathbf{e}},\ \   \text{ for  any}\   r\in \mathbb{Z}  \text { and }\mathbf{e}\in \mathbb{Z}^{m}.$$
It is straightforward to show that $\overline{XY} = \overline{Y}\ \overline{X}$ for any $X, Y \in \mathcal{T}_{\mathfrak{q}}$ and that 
quantum cluster monomials are bar-invariant. The bar-involution on $\mathcal{A}_{\mathfrak{q}}(\Lambda,\widetilde{B})$ can be naturally induced.

The following celebrated quantum Laurent phenomenon proved by Berenstein and Zelevinsky
is one of the most important structural results on quantum cluster algebras, which asserts that every cluster variable can be written as a Laurent polynomial of cluster variables in any other cluster.
\begin{theorem}\cite[Theorem 5.1]{berzel}\label{laurent}
The quantum cluster algebra $\Acal_{\mathfrak{q}}(\Lambda,\widetilde{B})$ is a
subalgebra of the ring of  Laurent polynomials in the cluster variables in any cluster over $\ZZ\mathbb{P}$.
\end{theorem}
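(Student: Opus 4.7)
The plan is to adapt the original Berenstein--Zelevinsky strategy used to establish this result in \cite{berzel}, which proceeds via an intermediate object, the \emph{quantum upper cluster algebra}. For any seed $(\Lambda', \widetilde{B}', X')$ mutation-equivalent to the initial seed I would introduce its associated quantum torus $\mathcal{T}_{\mathfrak{q}}(X')$, viewed as a $\mathbb{Z}\mathbb{P}$-subalgebra of $\mathcal{F}_{\mathfrak{q}}$, together with the upper bound
$$\mathcal{U}(\Lambda', \widetilde{B}', X') := \mathcal{T}_{\mathfrak{q}}(X') \cap \bigcap_{k=1}^{n} \mathcal{T}_{\mathfrak{q}}(\mu_k(X')).$$
The theorem reduces to proving: (a) every cluster variable arising from any seed in the mutation class belongs to $\mathcal{U}(\Lambda, \widetilde{B}, X)$ of the initial seed; and (b) $\mathcal{U}$ is invariant under mutation, so that (a) can be bootstrapped along arbitrary mutation sequences.

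Step (a) is a direct computation: given a cluster variable $X'_j$, I would express it as a Laurent polynomial in the initial cluster $X$ and check separately that it is also Laurent in each $\mu_k(X)$, using only the explicit exchange formula stated in the definition of mutation. The crux is step (b), the \emph{caterpillar lemma}: I must prove $\mathcal{U}(\mu_k(\Lambda, \widetilde{B}, X)) = \mathcal{U}(\Lambda, \widetilde{B}, X)$ for each $k$. This reduces to analyzing two-step mutations $\mu_l \mu_k$ and checking that no new denominators beyond those controlled by $X$ and its single mutations appear when rewriting elements of $\mathcal{T}_{\mathfrak{q}}(\mu_l\mu_k(X))$. The compatibility hypothesis $\widetilde{B}^{tr}\Lambda = (D|0)$ intervenes decisively here, as it forces the two exchange monomials $M_\pm$ in the quantum exchange relation $X_k X'_k = \mathfrak{q}^{\alpha_+} M_+ + \mathfrak{q}^{\alpha_-} M_-$ to quasi-commute with each $X_i$ by exactly the right power of $\mathfrak{q}$, so that the quantum exchange relation plays the structural role of its classical counterpart.

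Once the caterpillar lemma is in place, induction on the length of a mutation sequence yields $\mathcal{U}(\Lambda', \widetilde{B}', X') = \mathcal{U}(\Lambda, \widetilde{B}, X)$ for every mutation-equivalent seed, and combining this with (a) gives that every cluster variable lies in $\mathcal{T}_{\mathfrak{q}}(X)$, which is the desired Laurent phenomenon.

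The main obstacle will be the caterpillar lemma, since in the quantum setting the greatest-common-divisor / primality arguments that underpin the classical proof must be replaced by non-commutative ones. The appropriate substitute is the observation that every $X_i$ is a normal element of the quantum torus $\mathcal{T}_{\mathfrak{q}}$, with conjugation acting by a scalar in $\mathbb{Z}[\mathfrak{q}^{\pm 1/2}]$ determined by $\Lambda$; intersections of principal Ore localizations $\mathcal{T}_{\mathfrak{q}}[X_i^{-1}]$ can then be analyzed analogously to those of Laurent polynomial rings, provided one carefully tracks the $\mathfrak{q}$-twists imposed by $\Lambda$ at each step.
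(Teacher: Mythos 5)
This theorem is imported into the paper from \cite{berzel} and is stated there without proof, so there is no internal argument of the paper to compare against; the only ``proof'' on record is Berenstein--Zelevinsky's own. Your outline faithfully reconstructs exactly that strategy---the upper bound $\mathcal{U}$ cut out by the quantum tori of a seed and its one-step mutations, its invariance under mutation via the analysis of two-step mutations, and the replacement of the classical gcd/coprimality arguments by the fact that the cluster variables are normal elements of the quantum torus quasi-commuting by powers of $\mathfrak{q}$ governed by the compatibility condition---so the architecture is correct, with the caveat that the substantive technical content (the invariance lemma itself) is named rather than carried out, and that step (a) should be phrased as ``each cluster variable of a seed lies in the upper bound of that same seed, directly from the exchange relation,'' after which invariance does the rest.
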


\subsection{Quantum cluster character}
Let $k=\mathbb{F}_q$ be a finite field of order $q$. Fix two positive integers $n \leq m$. Let  $\widetilde{Q}$ be an acyclic valued quiver with vertex set $\{1,2, \dots, m\}$. The full subquiver $Q$ on the vertices $\{1,\ldots,n\}$ is called the principal part of $\widetilde{Q}$.  The elements in the subset $\{n+1,\dots,m\}$ are called the frozen vertices.

Let $\mathfrak{S} =k Q$  and $\widetilde{\mathfrak{S}}=k\widetilde{Q}$ be the  path algebras of $Q$ and $\widetilde{Q}$, respectively. Let
$S_i$, $P_i$ and $I_i$ be the simple, projective and injective $\widetilde{\mathfrak{S}}$-module associated to the vertex $i$ for any  $1\leq i\leq m$, respectively.
The $m\times n$ matrix $\widetilde{B}=(b_{ij})$ associated to $\widetilde{Q}$ is defined by
$$
b_{ij}=\dim_{\End_{\widetilde{\mathfrak{S}}}(S_{i})^{op}}\Ext^{1}_{\widetilde{\mathfrak{S}}}(S_{i},S_{j})-
\dim_{\End_{\widetilde{\mathfrak{S}}}(S_{i})}\Ext^{1}_{\widetilde{\mathfrak{S}}}(S_{j},S_{i})
$$
for $1\leq i\leq m$, $1\leq j\leq n$.  Note that the upper $n \times n$ part of $\widetilde{B}$  is $B$. The
left $m\times n$ submatrix of the identity matrix of size $m\times
m$ is denoted by $\widetilde{I}$.

Assume that there exists some skew-symmetric
 integer matrix $\Lambda$ of size $m\times m$ such that
\begin{align*}
\Lambda(-\widetilde{B})=\left(\begin{array}{c} D_n\\
0 \end{array}\right),\nonumber
\end{align*}
where $D_n=\operatorname{diag}(d_1,\cdots,d_n)$ with $d_i\in \mathbb{N}$ for
$1\leq i\leq n$.  Note that if we  enlarge the quiver $Q$ to $\widetilde{Q}$, where we attach principal frozen vertices to the valued quiver $Q$,  such a matrix $\Lambda$ always exists  \cite{z}.

The quantum cluster algebra associated to this obtained compatible pair $(\Lambda,\widetilde{B})$  is simply denoted by
$\mathcal{A}_{\mathfrak{q}}(Q)$.

Let $\widetilde{R}=\widetilde{R}_{\widetilde{Q}}=(\widetilde{r}_{ij}) $
be the $m\times n$ matrix with 
\[
\widetilde{r}_{ij}:=\mathrm{dim}_{\End_{\widetilde{\mathfrak{S}}}(S_{i})}\Ext^{1}_{\widetilde{\mathfrak{S}}}(S_j,S_i)
\] for $1\leq i\leq m$ and $1\leq j\leq n$. And define
$\widetilde{R}^{'}:=\widetilde{R}_{\widetilde{Q}^{op}}.$
Note that the principal part of  $\widetilde{R}$ is $R$, 
$\widetilde{B}=\widetilde{R}^{'}-\widetilde{R}$ and $B=R^{'}-R$.

Let $\mathcal C_{\widetilde{Q}}$ be the cluster category of
the valued quiver $\widetilde{Q}$ with the shift functor 
$[1]$ and the Aulander-Reiten translation functor  $\tau$. For more details about cluster category,  the readers can refer to~\cite{BMRRT, rupel-2}. Note that each object $M$ of $\mathcal
C_{\widetilde{Q}}$ can be uniquely decomposed up to isomorphism as follows:
$$M=M_0\oplus P_M[1]$$
where $M_0$ is an $\widetilde{\mathfrak{S}}$-module and $P_M$ is a projective
$\widetilde{\mathfrak{S}}$-module. Let $P_M=\bigoplus_{1\leq i \leq m}m_iP_i$.  The definition of dimension vector $\mathrm{\underline{dim}}$ on
$\widetilde{\mathfrak{S}}$-modules  can be extended on objects of $\mathcal C_{\widetilde{Q}}$
by defining 
$$\mathrm{\underline{dim}}M=\mathrm{\underline{dim}}M_0-(m_i)_{1\leq i \leq m}.$$

In the following, we will denote the dimension vector of an
$\mathfrak{S}$-module $X$ by the corresponding underlined
lower case letter $\underline{x}$ viewed as a column
vector in $\mathbb{Z}^n.$ For simplifying notations, we write $\mathrm{Ext}^{1}(M,N):=\mathrm{Ext}_{\widetilde{\mathfrak{S}}}^{1}(M,N)$ and
$\mathrm{Hom}(M,N):=\mathrm{Hom}_{\widetilde{\mathfrak{S}}}(M,N)$.
The Euler form on $\widetilde{\mathfrak{S}}$-modules $M$ and $N$ is given by
$$\langle M,N\rangle=\mathrm{dim}_{k}\mathrm{Hom}(M,N)-\mathrm{dim}_{k}\mathrm{Ext}^{1}(M,N).$$

Let $\mathcal{A}_{q}(Q)$ be the specialization of the quantum cluster algebra $\mathcal{A}_{\mathfrak{q}}(Q)$ by evaluating $\mathfrak{q}=q$. 
For an $\mathfrak{S}$-module $N$, we denote its socle and radical by $\operatorname{Soc}\,N$ and $\operatorname{rad}\,N$.
Let $M$ be an $\mathfrak{S}$-module and $I$ an injective
$\widetilde{\mathfrak{S}}$-module, then the quantum cluster 
character of $\mathcal{A}_{q}(Q)$ is defined as:
                    $$
                       X_{M\oplus I[-1]}=\sum_{\underline{e}} |\mathrm{Gr}_{\underline{e}} M|q^{-\frac{1}{2}
\langle
\underline{e},\underline{m}-\underline{e}-\underline{i}\rangle}X^{-\widetilde{B}\underline{e}-(\widetilde{I}-\widetilde{R}^{'})\underline{m}+\underline{\mathrm{dim}}\, 
\soc \, I},
                    $$
where $\underline{\operatorname{dim}}\, I= \underline{i},
\underline{\mathrm{dim}}\,M= \underline{m}$ and
$\mathrm{Gr}_{\underline{e}}M$ is the Grassmannian of sub-representations of $M$ with dimension
vector  $\underline{e}$. Note
that  for any projective $\widetilde{\mathfrak{S}}$-module $P$ and
injective $\widetilde{\mathfrak{S}}$-module $I$ with $\soc \, I=P/\rad
\, P$, 
we have that
$$
X_{P[1]}=X_{\tau P}=X^{\underline{\mathrm{dim}} \, P/\rad
\, P}=X^{\underline{\mathrm{dim}}\, \soc \, I}=X_{I[-1]}=X_{\tau^{-1}I}.
$$

The main
results in \cite{fanqin, rupel-2} show that $\mathcal{A}_{{q}}(Q)$ as the
$\mathbb{ZP}$-subalgebra of $\Fcal_{q}$ is generated by cluster variables
$$\{X_M| M\ \text{is an  indecomposable rigid $\mathfrak{S}$-module}\}\sqcup 
\{X_{I_i[-1]} | 1\leq i\leq n\}.$$

Let $M$ and $N$ be two $\mathfrak{S}$-modules and $\theta:N\to \tau M$ be a morphism, we can deduce an exact sequence
\[
  0 \longrightarrow D \longrightarrow  N  \stackrel{\theta}{\longrightarrow} \tau M   \longrightarrow \tau A \oplus I \longrightarrow 0  
 \]
 where $D=\operatorname{ker} \theta$, $\tau A \oplus I=\operatorname{coker} \theta$, $I$ is an injective $\widetilde{\mathfrak{S}}$-module, $A$ and $M$ have the same maximal projective summand.

The following theorems are the generalizations of those in \cite{dx-2, fanqin} to acyclic valued quivers.
\begin{theorem}\cite[Theorem 4.5]{rupel-2}\label{m-1}
Assume  $M$ and $N$ are $\mathfrak{S}$-modules with a unique (up to scalar) nontrivial extension $E\in\Ext^1 (M,N)$, in particular $\dim_{\End(M)}\Ext^1 (M,N)$$=1$.  Let $\theta\in\Hom(N,\tau M)$ be the equivalent morphism with $A, D, I$ described as above.  Furthermore assume that $\Hom (A\oplus D,I)=0=\Ext^1 (A,D)$.  Then we have that
 \begin{align*}
 & X_MX_N \\
   = &q^{\half\Lambda((\widetilde{I}-\widetilde{R}^{'}){\underline{m}},(\widetilde{I}-\widetilde{R}^{'}){\underline{n}})}X_E + q^{\half\Lambda((\widetilde{I}-\widetilde{R}^{'}){\underline{m}},(\widetilde{I}-\widetilde{R}^{'}){\underline{n}})+\half\langle{M},{N}\rangle-\half\langle {A}, {D}\rangle}X_{D\oplus A\oplus I[-1]}.
\end{align*} 
 \end{theorem}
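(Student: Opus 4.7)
The plan is to expand both sides in the basis $\{X^{\mathbf{d}}\}$ of $\Tcal_{q}$ and verify that the coefficients match monomial by monomial. I would first apply the definition of the quantum cluster character to $X_M$ and $X_N$ separately and multiply them out using the twisted product $X^{\mathbf{a}} X^{\mathbf{b}} = q^{\Lambda(\mathbf{a},\mathbf{b})/2} X^{\mathbf{a}+\mathbf{b}}$. This writes $X_M X_N$ as a double sum
\[
X_M X_N = \sum_{\underline{e} \leq \underline{m},\ \underline{f} \leq \underline{n}} |\mathrm{Gr}_{\underline{e}} M|\,|\mathrm{Gr}_{\underline{f}} N|\, q^{\alpha(\underline{e},\underline{f})}\, X^{\gamma(\underline{e},\underline{f})},
\]
where $\alpha$ packages the Euler-form exponents and the $\Lambda$-contribution from the twisted product, and $\gamma$ is the linear function of $(\underline{e},\underline{f})$ appearing in the exponent of $X$. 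I would perform analogous expansions of $X_E$ and $X_{D\oplus A\oplus I[-1]}$.

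Second, I would invoke the decisive representation-theoretic input: a Hubery/Green-type counting identity (used in \cite{dx-2,fanqin} in the simply-laced case and here adapted to acyclic valued quivers) that stratifies pairs of subrepresentations of $M$ and $N$ according to whether they lift to a subrepresentation of (i) the unique nonsplit middle term $E$ of $0 \to N \to E \to M \to 0$, or (ii) the module $D \oplus A$ controlled by the four-term sequence $0 \to D \to N \to \tau M \to \tau A \oplus I \to 0$. Uniqueness of the nontrivial extension $E$ (up to scalar) is exactly what keeps this dichotomy clean: no other nonsplit middle term contributes, so every pair $(\underline{e},\underline{f})$ belongs to exactly one stratum. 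The hypotheses $\Hom(A\oplus D, I) = 0$ and $\Ext^{1}(A,D) = 0$ ensure that case (ii) counts exactly the subrepresentations of $D\oplus A\oplus I[-1]$ without cross-terms polluting the identity.

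Third, one matches the exponents of $q$. Using the compatibility $\widetilde{B}^{tr}\Lambda = (D_n \mid 0)$, the identity $\widetilde B = \widetilde R^{\prime} - \widetilde R$, and the dimension-vector relations $\underline{\dim} E = \underline m + \underline n$ together with the analogous formula for $D \oplus A \oplus I[-1]$ extracted from the four-term sequence, one rewrites every exponent in terms of $\Lambda\bigl((\widetilde I - \widetilde R^{\prime})\underline m,(\widetilde I - \widetilde R^{\prime})\underline n\bigr)$ and the Euler forms $\langle M, N\rangle$ and $\langle A, D\rangle$. The difference between the exponents of the two right-hand terms then collapses to exactly the correction $\tfrac{1}{2}\langle M,N\rangle - \tfrac{1}{2}\langle A, D\rangle$ predicted by the statement.

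The main obstacle is the bookkeeping of $q$-exponents in the third step. The Hubery-type counting of step two essentially follows the classical Caldero--Keller template, but the exponent matching requires delicate manipulation of how the Auslander--Reiten translation acts on dimension vectors and repeated use of $\widetilde{B}^{tr}\Lambda = (D_n\mid 0)$. The crucial role of $\Hom(A\oplus D,I) = 0 = \Ext^{1}(A,D)$ is to force the Euler form of $A \oplus D$ to split additively into $\langle A, A\rangle + \langle D, D\rangle$, which is what isolates the clean correction factor in front of $X_{D\oplus A\oplus I[-1]}$.
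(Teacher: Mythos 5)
First, a point of comparison you could not have known: the paper does not prove this statement at all. Theorem \ref{m-1} is imported verbatim from Rupel (cited as \cite[Theorem 4.5]{rupel-2}), so there is no ``paper's own proof'' to match; the only thing the present paper does with it is apply it. Your outline is therefore best measured against the proof in the cited source, and at that level it does follow the right template: Rupel's argument is indeed the quantum Caldero--Keller/Hubery scheme you describe --- expand both characters, multiply in the quantum torus, and stratify pairs of submodules of $M$ and $N$ according to whether they lift to a submodule of the nonsplit middle term $E$ or are accounted for by $D\oplus A\oplus I[-1]$ via the four-term sequence.

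That said, as a proof your proposal has a genuine gap: step two, the ``decisive counting identity,'' is named but not established, and it is the entire content of the theorem. In the quantum setting this is not a clean dichotomy of strata. The relevant maps between Grassmannians have fibres whose cardinalities are powers of $q$ determined by $\Hom$- and $\Ext$-spaces of the various subobjects and quotients, and the theorem is precisely the assertion that all of these local $q$-powers assemble into the two \emph{global} prefactors $q^{\half\Lambda((\widetilde{I}-\widetilde{R}^{'})\underline{m},(\widetilde{I}-\widetilde{R}^{'})\underline{n})}$ and $q^{\half\Lambda(\cdots)+\half\langle M,N\rangle-\half\langle A,D\rangle}$, independent of the stratum. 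Nothing in your sketch shows this independence, and it is where the hypotheses actually enter. Relatedly, your closing claim that $\Hom(A\oplus D,I)=0=\Ext^1(A,D)$ serves to make the Euler form of $A\oplus D$ split as $\langle A,A\rangle+\langle D,D\rangle$ is not their role: these vanishing conditions are what allow the contribution of the non-lifting stratum to be identified with the single character $X_{D\oplus A\oplus I[-1]}$ (with its Grassmannian points counted without cross-terms and with the uniform exponent $\half\langle M,N\rangle-\half\langle A,D\rangle$), not to decompose a self-Euler form. To turn your outline into a proof you would need to state and verify the $q$-weighted counting identity explicitly, fibres and all; as written, the argument assumes its conclusion at the crucial point.
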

 
 Let $M$ be an $\mathfrak{S}$-module and $I$ an injective $\widetilde{\mathfrak{S}}$-module.  Let $\nu$ be the Nakayama functor on $\widetilde{\mathfrak{S}}$-modules and  
 write $P=\nu^{-1}(I)$.   From morphisms $f: M\to I$ and $g:P\to M$, we can deduce two exact sequences
 \[ 0 \longrightarrow G \longrightarrow  M  \stackrel{f}{\longrightarrow} I   \longrightarrow I' \longrightarrow 0  \]
 and
  \[ 0 \longrightarrow P' \longrightarrow P  \stackrel{g}{\longrightarrow} M  \longrightarrow F \longrightarrow 0  \]
   where $G=\ker f$, $I'=\operatorname{coker} f$ is an injective $\widetilde{\mathfrak{S}}$-module, $P'=\ker g$ is a projective $\widetilde{\mathfrak{S}}$-module and $F=\operatorname{coker} g$.

\begin{theorem}\cite[Theorem 4.8]{rupel-2}\label{m-2}
Let $M$ be an $\mathfrak{S}$-module, $I$ and $P$  be $\widetilde{\mathfrak{S}}$-modules defined as above.  Assume that there exist unique (up to scalar) morphisms $f\in\Hom(M,I)$ and $g\in\Hom(P,M)$, in particular $$\dim_{\End(I)}\Hom(M,I)=\dim_{\End(P)}\Hom(P,M)=1.$$  Define $F$, $G$, $I'$, $P'$ as above and assume further that $\Hom(P',F)=\Hom(G,I')=0$.  Then we have
\begin{align*}
& X_MX_{I[-1]}\\
= & q^{-\half\Lambda((\widetilde{I}-\widetilde{R}^{'}){\underline{m}},\underline{\mathrm{dim}}\, 
\soc \, I)}X_{G\oplus I'[-1]}+ q^{-\half\Lambda((\widetilde{I}-\widetilde{R}^{'}){\underline{m}}, \underline{\mathrm{dim}}\, 
\soc \, I)-\half\mathrm{dim}_{k}\operatorname{End}(I)}X_{F\oplus P'[1]}.
\end{align*}   
\end{theorem}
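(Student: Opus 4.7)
The plan is to verify the identity by expanding both sides via the quantum cluster character formula and matching contributions monomial-by-monomial in the quantum torus $\mathcal{T}_{\mathfrak{q}}$. Since $X_{I[-1]} = X^{\underline{\dim}\,\soc\,I}$ is a pure monomial, the left-hand side $X_M X_{I[-1]}$ becomes a single Grassmannian sum: the expansion of $X_M$ shifted by this monomial through the twisted product of $\mathcal{T}_{\mathfrak{q}}$. On the right, one expands $X_{G\oplus I'[-1]}$ and $X_{F\oplus P'[1]}$ using the same formula, obtaining a Grassmannian sum over $\mathrm{Gr}_{\underline{e}'}(G)$ shifted by $X^{\underline{\dim}\,\soc\,I'}$, and a Grassmannian sum over $\mathrm{Gr}_{\underline{e}''}(F)$ shifted by $X_{P'[1]} = X^{\underline{\dim}\,\soc\,\nu(P')}$, respectively. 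The proof then reduces to (a) a bijection between the underlying index sets and (b) equality of the $q$-prefactors.

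For (a), one decomposes $\mathrm{Gr}_{\underline{e}}(M)$ according to how a subrepresentation $W \subseteq M$ interacts with the distinguished morphisms $f$ and $g$. Every $W$ either lies inside $G = \ker f$, in which case it naturally pairs with a subrepresentation of $I'$ arising from the cokernel piece of $f$ to produce an element of $\mathrm{Gr}_{?}(G \oplus I')$; or $W$ has nonzero image under $f$, in which case a dual construction using the image-and-preimage pattern of $g$ identifies $W$ with the data of a subrepresentation of $F$ together with a subrepresentation of $P$ containing $P'$. The hypotheses $\Hom(G,I') = 0$ and $\Hom(P',F) = 0$ are exactly what forces each reconstruction of $W$ from its reduced data to be unique, preventing any overlap or double-counting between the two cases. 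This is the valued-quiver analogue of the classical subrepresentation combinatorics underlying the cluster multiplication formulas in \cite{ck, X}.

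The main obstacle is (b), the bookkeeping of the $q$-exponents. The target prefactors, in particular the $-\tfrac{1}{2}\dim_k\End(I)$ correction in the second term, must be produced by combining (i) the Euler-form exponent $q^{-\frac{1}{2}\langle \underline{e}, \underline{m}-\underline{e}-\underline{i}\rangle}$ built into the cluster character, and (ii) the twist $q^{\frac{1}{2}\Lambda(\cdot,\cdot)}$ produced by each twisted product in $\mathcal{T}_{\mathfrak{q}}$. Collapsing these into the stated exponents requires careful manipulation of the compatibility relation $\Lambda(-\widetilde{B}) = (D_n\mid 0)^{tr}$ and the identity $\widetilde{B} = \widetilde{R}' - \widetilde{R}$, together with standard additivity of the Euler form along the two four-term sequences defining $G, I', P', F$. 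The delicate point is the $-\tfrac{1}{2}\dim_k\End(I)$ correction: I expect it to arise from tracking the Nakayama identification $\End(I)\cong \End(P)$ and from accounting for the numerical discrepancy between summing over subrepresentations of $M$ ``from the $f$-side'' versus ``from the $g$-side'', a discrepancy which is trivial only when $\dim_k\End(I) = 1$.
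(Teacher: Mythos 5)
This statement is not proved in the paper at all: it is quoted verbatim as \cite[Theorem 4.8]{rupel-2} and used as a black box, so there is no in-paper argument to compare yours against. Judged on its own terms, your proposal is a reasonable outline of the strategy that is actually used in the source (expand everything over a finite field via the quantum cluster character and match Grassmannian strata), but as written it is a plan rather than a proof, and both of its load-bearing steps are left open.

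Concretely: in step (a) you assert a dichotomy on $W\in\mathrm{Gr}_{\underline{e}}(M)$ --- either $W\subseteq G=\ker f$, or $W$ is captured by the $g$-side and corresponds to a subrepresentation of $F=\operatorname{coker} g$ --- but you never prove that a subrepresentation with $f|_W\neq 0$ must contain $\operatorname{im} g$ (this is where the one-dimensionality of $\Hom(M,I)$ over $\End(I)$ and of $\Hom(P,M)$ over $\End(P)$ actually enters, and it is not automatic), nor do you rule out overlap of the two cases. Moreover the correspondence on the second stratum is in general not a bijection but a map whose fibers have cardinality a power of $q$; the factor $q^{-\frac{1}{2}\dim_k\End(I)}$ in the second term is precisely the trace of this fiber count, so treating the index sets as being in bijection and hoping the correction ``arises from the Nakayama identification'' cannot work --- you would lose exactly the term you need to explain. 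You also attribute the hypotheses $\Hom(G,I')=0$ and $\Hom(P',F)=0$ to ``preventing double-counting,'' whereas their actual role is to make the characters $X_{G\oplus I'[-1]}$ and $X_{F\oplus P'[1]}$ of the direct sums split as twisted products of the corresponding monomials with $X_G$ and $X_F$, so that the right-hand side can be compared with your stratified sum at all. Finally, step (b) is explicitly deferred (``I expect it to arise from\dots''): without carrying out the $\Lambda$- and Euler-form bookkeeping along the two four-term exact sequences, the identity of the $q$-prefactors --- the only genuinely quantum content of the theorem --- remains unverified. As it stands the proposal identifies the right skeleton but does not constitute a proof; the details are exactly those supplied in \cite[Section 4]{rupel-2}.
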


\section{Recursive formulas for the Kronecker quantum cluster algebra  with principle coefficients}

Consider the Kronecker quiver $Q$, i.e., the quiver  of type
$\widetilde{A}_{1,1}$:
$$
\xymatrix {1\bullet  \ar @<2pt>[r] \ar @<-2pt>[r]& \bullet 2}
$$

It is well-known \cite{DR} that indecomposable $kQ$-modules
are grouped into (up to isomorphism) three families: the indecomposable preprojective
modules with dimension vector $(l,l+1)$ for $l\in \mathbb{Z}_{\geq 0}$ denoted by $V(-l)$,
the indecomposable regular modules (in
particular, denote by $R(l)$ the indecomposable regular module
with dimension vector $(l, l)$ for $l \in \mathbb{Z}_{\geq 1}$ from the same homogeneous tube whose mouth has dimension vector $(1,1)$), and the indecomposable preinjective modules
with dimension vector $(l+1,l)$ for $l\in \mathbb{Z}_{\geq 0}$ denoted by $V(l+3)$. 

The preprojective  component looks like the following:

$${\xymatrix{
& \bullet  \ar@<-0.5ex>[dr]  \ar@<0.5ex>[dr]  &   & \bullet  \ar@<-0.5ex>[dr]  \ar@<0.5ex>[dr]      & & \cdots & \\
\bullet  \ar@<-0.5ex>[ur] \ar@<0.5ex>[ur] &                                                                   & \bullet \ar@<-0.5ex>[ur] \ar@<0.5ex>[ur] &            & \bullet   \ar@<-0.5ex>[ur] \ar@<0.5ex>[ur] & \cdots  & \cdots }
}$$

The preinjective components looks like the following:

$${\xymatrix{
\cdots  & \cdots &\bullet \ar@<-0.5ex>[dr]  \ar@<0.5ex>[dr]  &   & \bullet  \ar@<-0.5ex>[dr]  \ar@<0.5ex>[dr]      & & \bullet \\
& \cdots \ar@<-0.5ex>[ur] \ar@<0.5ex>[ur] &                                                                   & \bullet \ar@<-0.5ex>[ur] \ar@<0.5ex>[ur] &            & \bullet   \ar@<-0.5ex>[ur] \ar@<0.5ex>[ur] &  }
}$$

We consider the following ice
quiver $\widetilde{Q}$ with frozen vertices $3$ and $4$:
$$\xymatrix{1\bullet \ar @<2pt>[r] \ar @<-2pt>[r]  & \bullet 2\\
3 \bullet \ar[u]&   \bullet 4\ar[u]}$$ Thus we have
$$\widetilde{R}^{'}=\left(\begin{array}{cc} 0 & 2\\
0& 0\\
1& 0\\
0 & 1\end{array}\right),\ 
\widetilde{R}=\left(\begin{array}{cc} 0 & 0\\
2& 0\\
0& 0\\
0 & 0\end{array}\right),\ 
\widetilde{I}-\widetilde{R}^{'}=\left(\begin{array}{cc} 1 & -2\\
0& 1\\
-1& 0\\
0 & -1\end{array}\right),\ 
\widetilde{B}=\left(\begin{array}{cc} 0 & 2\\
-2& 0\\
1& 0\\
0 & 1\end{array}\right).$$

An easy calculation shows that the following skew-symmetric $4\times
4$ integer
matrix $$\Lambda=\left(\begin{array}{cccc} 0 & 0& -1& 0\\
0 & 0& 0& -1\\
1 & 0& 0& -2\\
0 & 1& 2& 0\end{array}\right)$$ satisfying
\begin{align}\label{eq:simply_laced_compatible}
\Lambda(-\widetilde{B})=\left(\begin{array}{c} D_2\\
0 \end{array}\right),\nonumber
\end{align}
where $D_2=\left(\begin{array}{cc} 1 & 0\\
0 & 1\end{array}\right)$. The quantum cluster algebra $\mathcal{A}_{q}(Q)$ associated to this pair $(\Lambda,\widetilde{B})$ is called the Kronecker quantum cluster algebra  with principle coefficients. We denote the coefficients by 
$y_1=X_{I_3[-1]},\ y_2=X_{I_4[-1]},$
and all cluster variables by 
$x_{l}=X_{V(l)}$ for $l\in \ZZ$
where $x_{1}=X_{V(1)}=X_{I_1[-1]}$ and $x_{2}=X_{V(2)}=X_{I_2[-1]}$ are initial cluster variables.

By the definition of quantum cluster characters, it follows  that $$X_{R(1)}=X^{(1,-1,1,1)}+X^{(-1,-1,1,0)}+X^{(-1,1,0,0)}.$$
Therefore the expression of $X_{R(1)}$ is independent of the choice of the indecomposable regular module with dimension vector $(1,1)$ and then we denote $X_{R(1)}$ by  $X_\delta$.  Note that the element $X_\delta$ coincides with 
the quantum loop element $s_1$ defined in \cite{cla} in terms of perfect matchings associated to  the Kronecker quantum cluster algebra  with principle coefficients. A direct calculation shows that 
$$X_\delta= x_0 x_3 -q^{\frac{3}{2}} x_1 x_2 y_2,$$ and thus  $X_\delta$ belongs to $\Acal_{q}(Q)$ (cf. \cite{cla}).

The $n$-th Chebyshev polynomials of the second kind $S_{n}(x)$ appeared in \cite{cz} is related to  the dual semi-canonical basis of the Kronecker cluster algebra. We modify the  definition  of $S_{n}(x)$ for $n\in \mathbb{Z}_{\geq 0}$ on $X_\delta$ as follows
  $$
  S_0(X_\delta)=1,S_1(X_\delta)=X_\delta, S_2(X_\delta)=S_1(X_\delta)S_1(X_\delta)-X^{(0,0,1,1)},$$
  $$S_{n+1}(X_\delta)=S_n(X_\delta)S_1(X_\delta)-X^{(0,0,1,1)} S_{n-1}(X_\delta), \ \ for\ n\geq 2.
 $$
In the following, we denote $S_n(X_\delta)$ by $S_n$ for simplicity. According to  the definition of $S_n$ and the fact that $S_1\in \mathcal{A}_q (Q)$, it is easy to see  $S_n\in \mathcal{A}_q (Q)$ for any $n\geq 1$. 

We start with some technical lemmas:
\begin{lemma}\label{coef}  For any  $n\geq 0$ and  $m\geq 0$, we have that
\begin{itemize}
\item[(1)] $y_1 S_n=q^{-n} S_n y_1$;
\item[(2)] $y_2 S_n=q^n S_ny_2$;
\item[(3)] $X^{(0,0,1,1)} S_n=S_n X^{(0,0,1,1)}$;
\item[(4)] $S_mS_n=S_n S_m.$
\end{itemize} 
\end{lemma}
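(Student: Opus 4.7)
My plan is to proceed in two phases: verify all four identities directly for $n = 1$ by an explicit computation in the quantum torus, and then propagate them to all $n$ by induction on the defining recursion $S_{n+1} = S_n S_1 - X^{(0,0,1,1)} S_{n-1}$. For the base cases, I will use the explicit three-term expression
\[
S_1 = X_\delta = X^{(1,-1,1,1)} + X^{(-1,-1,1,0)} + X^{(-1,1,0,0)},
\]
together with the general commutation rule $X^{{\bf e}} X^{{\bf f}} = q^{\Lambda({\bf e},{\bf f})} X^{{\bf f}} X^{{\bf e}}$. A short calculation using the given matrix $\Lambda$ shows $\Lambda({\bf e}_3, {\bf f}) = -1$, $\Lambda({\bf e}_4, {\bf f}) = 1$, and $\Lambda({\bf e}_3 + {\bf e}_4, {\bf f}) = 0$ for each of the three exponent vectors ${\bf f}$ appearing in $X_\delta$, which yields the $n = 1$ cases of (1), (2), and (3) at once. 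I will also record the auxiliary identities $y_1 X^{(0,0,1,1)} = q^{-2} X^{(0,0,1,1)} y_1$ and $y_2 X^{(0,0,1,1)} = q^{2} X^{(0,0,1,1)} y_2$, which follow from $\Lambda({\bf e}_3, {\bf e}_3 + {\bf e}_4) = -2$ and $\Lambda({\bf e}_4, {\bf e}_3 + {\bf e}_4) = 2$.

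For (1), I then run a strong induction on $n$. Granted $y_1 S_k = q^{-k} S_k y_1$ for $k = n-1, n$, the recursion together with the auxiliary identities give
\begin{align*}
y_1 S_{n+1} &= y_1 S_n S_1 - y_1 X^{(0,0,1,1)} S_{n-1} \\
&= q^{-n} S_n y_1 S_1 - q^{-2} X^{(0,0,1,1)} y_1 S_{n-1} \\
&= q^{-n-1} S_n S_1 y_1 - q^{-n-1} X^{(0,0,1,1)} S_{n-1} y_1 \\
&= q^{-n-1} S_{n+1} y_1,
\end{align*}
which is exactly what is needed. The argument for (2) is completely symmetric, with $q^{-k}$ replaced by $q^{k}$ and $q^{-2}$ by $q^{2}$; the matching powers of $q$ on the two summands still align, so the induction goes through unchanged.

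For (3) and (4), I prefer a structural argument. Since $S_1$ commutes with $X^{(0,0,1,1)}$ and $X^{(0,0,1,1)}$ trivially commutes with itself, a straightforward induction using $S_{n+1} = S_n S_1 - X^{(0,0,1,1)} S_{n-1}$ shows that every $S_n$ lies in the commutative subalgebra of $\mathcal{T}_q$ generated by the two commuting elements $S_1$ and $X^{(0,0,1,1)}$. This abelian subalgebra delivers (3) and (4) simultaneously.

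The main obstacle, if one exists, is purely bookkeeping: one must track carefully that $y_i$ picks up different powers of $q$ when moving past $S_1$ versus past $X^{(0,0,1,1)}$ (namely $q^{\pm 1}$ versus $q^{\pm 2}$), and verify that these powers conspire with the indices in the recursion so that both summands of $S_{n+1} = S_n S_1 - X^{(0,0,1,1)} S_{n-1}$ acquire the same power of $q$ after $y_i$ has been pushed through. This is precisely what the displayed computation verifies; no deeper structural input is required.
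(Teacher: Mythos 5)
Your proof is correct and follows essentially the same route as the paper: an explicit quantum-torus computation for $n=1$ followed by induction on the recursion $S_{n+1}=S_nS_1-X^{(0,0,1,1)}S_{n-1}$, with the powers of $q$ tracked exactly as in the paper's argument for (1) and (2). The only cosmetic difference is in (3)--(4), where you verify $\Lambda(\mathbf{e}_3+\mathbf{e}_4,\mathbf{f})=0$ directly and invoke the commutative subalgebra generated by $S_1$ and $X^{(0,0,1,1)}$, whereas the paper deduces (3) from (1), (2) and the identity $X^{(0,0,1,1)}=qy_1y_2$; both are valid and equivalent in substance.
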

\begin{proof}
(1)\  When $n=0$, it is trivial. When $n=1$, we have 
\begin{align*}
 y_1 S_1 = &  X^{(0,0,1,0)} (X^{(1,-1,1,1)} +X^{(-1,-1,1,0)} + X^{(-1,1,0,0)})    \\
= &  q^{-1}  (X^{(1,-1,1,1)} +X^{(-1,-1,1,0)} + X^{(-1,1,0,0)})  X^{(0,0,1,0)} \\
=& q^{-1} S_1  y_1.
\end{align*} 
By induction, we have 
\begin{align*}
&y_1 S_{n+1}=y_1( S_n S_1 - X^{(0,0,1,1)} S_{n-1}) \\
=& q^{-(n+1)} S_n S_1 y_1 - q^{-2} X^{(0,0,1,1)} y_1 S_{n-1}\\
=& q^{-(n+1)} S_n S_1 y_1-q^{-2} q^{-(n-1)} X^{(0,0,1,1)} S_{n-1}y_1\\
=& q^{-(n+1)} S_{n+1} y_1.
\end{align*} 

(2)\ The proof is similar to (1).

(3)\ Note that $X^{(0,0,1,1)}=q y_1 y_2,$ the proof follows from (1) and (2).

(4)\ According to  (3) and by induction, the statement follows immediately.
\end{proof}

\begin{lemma}\label{bar-s}
For $n\geq 0$, we have that 
\begin{itemize}
\item[(1)] $\overline{S_n} =S_n$;
\item[(2)] $\overline{X^{(0,0,1,1)} S_{n} }= X^{(0,0,1,1)} S_{n}.$
\end{itemize} 
\end{lemma}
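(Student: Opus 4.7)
The plan is to prove both statements by straightforward induction on $n$, using the antihomomorphism property $\overline{XY}=\overline{Y}\,\overline{X}$ of the bar-involution together with the commutation relations from Lemma \ref{coef}. The key observation is that the bar-involution reverses the order of products, so whenever it is applied to $S_n S_1$ or $X^{(0,0,1,1)}S_{n-1}$, we will need to swap factors back into the original order, which is exactly what Lemma \ref{coef}(3)--(4) allows us to do.

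For part (1), I would first check the base cases. $S_0=1$ is trivially bar-invariant; and $S_1=X_\delta=X^{(1,-1,1,1)}+X^{(-1,-1,1,0)}+X^{(-1,1,0,0)}$ is a $\ZZ$-linear combination of monomials $X^{\mathbf{e}}$ with no $\mathfrak{q}$-coefficient, hence bar-invariant by the definition $\overline{\mathfrak{q}^{r/2}X^{\mathbf{e}}}=\mathfrak{q}^{-r/2}X^{\mathbf{e}}$. For the inductive step, assume $\overline{S_{n-1}}=S_{n-1}$ and $\overline{S_n}=S_n$. Applying the antihomomorphism property to the recursion $S_{n+1}=S_n S_1 - X^{(0,0,1,1)}S_{n-1}$ yields
\begin{align*}
\overline{S_{n+1}} &= \overline{S_1}\,\overline{S_n}-\overline{S_{n-1}}\,\overline{X^{(0,0,1,1)}}\\
&= S_1 S_n - S_{n-1}X^{(0,0,1,1)},
\end{align*}
using the inductive hypothesis and the bar-invariance of the monomial $X^{(0,0,1,1)}$. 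By Lemma \ref{coef}(4), $S_1 S_n=S_n S_1$, and by Lemma \ref{coef}(3), $S_{n-1}X^{(0,0,1,1)}=X^{(0,0,1,1)}S_{n-1}$, so this equals $S_{n+1}$.

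Part (2) is then immediate: by the antihomomorphism property, part (1), and Lemma \ref{coef}(3),
\[
\overline{X^{(0,0,1,1)}S_n}=\overline{S_n}\,\overline{X^{(0,0,1,1)}}=S_n X^{(0,0,1,1)}=X^{(0,0,1,1)}S_n.
\]

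There is no serious obstacle here; the whole argument is driven by the commutation relations already isolated in Lemma \ref{coef}, which were designed precisely so that the order reversal built into the bar-involution can be undone. The only point that needs verification is the base case $\overline{S_1}=S_1$, which is visible directly from the explicit monomial expansion of $X_\delta$ given earlier.
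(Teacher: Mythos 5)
Your proof is correct and follows essentially the same route as the paper: base cases $S_0=1$ and $\overline{S_1}=S_1$ from the explicit monomial expansion of $X_\delta$, then induction via the antihomomorphism property $\overline{XY}=\overline{Y}\,\overline{X}$ combined with the commutation relations of Lemma \ref{coef}(3)--(4) to restore the order of the factors, with part (2) following immediately. No gaps.
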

\begin{proof}
(1)\ When $n=0$, it is trivial. When $n=1$, by the definition of $X_\delta$, we  see that $\overline{X_\delta}=X_\delta$, i.e, $\overline{S_1}=S_1$.
Then by induction  and Lemma~\ref{coef}, we have
\begin{align*}
 \overline{S_{n+1}}=&   \overline{S_{n}  S_1 - X^{(0,0,1,1)} S_{n-1}}\\ 
 = &  \overline{S_1}\,  \overline{ S_n}- \overline{S_{n-1}} \,  \overline{X^{(0,0,1,1)}} \\
 = &  {S_1} { S_n}- {S_{n-1}} {X^{(0,0,1,1)}} \\
  = &  { S_n}{S_1} -  {X^{(0,0,1,1)}}  {S_{n-1}}\\ 
 =& S_{n+1} 
\end{align*} 

(2)\ By (1) and Lemma ~\ref{coef}, we have $$\overline{X^{(0,0,1,1)} S_{n} }= \overline{S_{n}} \overline{X^{(0,0,1,1)} } =S_{n} X^{(0,0,1,1)}  = X^{(0,0,1,1)} S_{n}.$$
Hence, the proof is finished.
\end{proof}

The following proposition gives multiplication formulas for the modified Chebyshev polynomials of the second kind on 
$X_\delta$.
\begin{proposition}\label{calc1} For $n\geq m\geq 1$, we have that $$S_mS_n=S_{n+m} + X^{(0,0,1,1)} S_{n+m-2} +\cdots   +X^{(0,0,m-1,m-1)} S_{n-m+2} + X^{(0,0,m,m)} S_{n-m}.$$
\end{proposition}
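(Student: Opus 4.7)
The plan is to proceed by induction on $m$. Write $T := X^{(0,0,1,1)}$ for brevity. By Lemma \ref{coef}, $T$ commutes with every $S_k$ and the $S_k$ commute among themselves, so powers of $T$ and products of $S_k$'s may be freely rearranged throughout. I would also rewrite the defining recursion once and for all as
$$S_j S_1 = S_{j+1} + T\, S_{j-1} \quad \text{for every } j \geq 1,$$
which is simply the recursion $S_{j+1}=S_j S_1 - T S_{j-1}$ rearranged (in particular the $j=1$ case reads $S_1 S_1 = S_2 + T$, which matches the direct definition of $S_2$). This identity is the $m=1$ instance of the proposition, so it furnishes the base case of the induction.

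For the inductive step, assume the proposition at levels $m-1$ and $m$, and fix $n \geq m+1$. Applying $S_{m+1} = S_m S_1 - T S_{m-1}$ from the right to $S_n$ gives
$$S_{m+1} S_n = (S_m S_n)\, S_1 - T\,(S_{m-1} S_n),$$
to which I apply the inductive hypothesis twice. The first term becomes $\sum_{k=0}^{m} T^k S_{n+m-2k} S_1$, and since $n+m-2k \geq n-m \geq 1$ throughout this range, the base-case identity expands every factor $S_{n+m-2k} S_1$ without ever invoking the anomalous boundary value $S_0 S_1 = S_1$. After this expansion and a single index shift $k \mapsto k+1$ in half of the terms, the right-hand side becomes
$$\sum_{k=0}^{m} T^k S_{n+m+1-2k} + \sum_{k=1}^{m+1} T^k S_{n+m+1-2k} - \sum_{k=1}^{m} T^k S_{n+m+1-2k},$$
and the middle sums collapse, leaving $\sum_{k=0}^{m+1} T^k S_{n+m+1-2k}$, which is exactly the desired formula at level $m+1$. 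The assumption $n \geq m+1$ ensures the last index $n-m-1 \geq 0$, so every $S_{\ast}$ appearing is defined.

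The only real obstacle is bookkeeping: one must keep the two sums indexed compatibly, avoid the boundary case $j=0$ of the base identity, and check that the inductive hypotheses at levels $m$ and $m-1$ are both legitimately applicable (both require the second index to be at least the first, which is automatic since $n \geq m+1 > m > m-1$). No additional structural input beyond Lemma \ref{coef} and the defining recursion for $S_n$ is needed.
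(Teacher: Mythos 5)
Your proposal is correct and follows essentially the same route as the paper: strong induction on $m$, peeling off one factor via $S_{m+1}=S_mS_1-X^{(0,0,1,1)}S_{m-1}$, applying the inductive hypothesis at levels $m$ and $m-1$, expanding each $S_{j}S_1$ by the $m=1$ case, and telescoping (the paper writes $S_1$ on the left, which is immaterial by Lemma \ref{coef}(4)). Your explicit tracking of the index ranges ($n\geq m+1$ so that $n+m-2k\geq 1$ and $n-m-1\geq 0$) is a slightly more careful bookkeeping of points the paper leaves implicit.
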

\begin{proof}
When $m=1$, the statement is just the definition. Assume that we have the equality for all positive integers $m$ less than or equal to $k$. Then by Lemma~\ref{coef}, we have
\begin{align*}
 & S_{k+1}S_n =  ({S_1}{ S_k}- {X^{(0,0,1,1)}}S_{k-1} )S_n \\
 = &   S_1(S_{n+k} + X^{(0,0,1,1)} S_{n+k-2} +\cdots   X^{(0,0,k-1,k-1)} S_{n-k+2} + X^{(0,0,k,k)} S_{n-k}) - \\
 & {X^{(0,0,1,1)}}  (S_{n+k-1} + X^{(0,0,1,1)} S_{n+k-3} +\cdots   X^{(0,0,k-2,k-2)} S_{n-k+3} + X^{(0,0,k-1,k-1)} S_{n-k+1}) \\
 =& (S_{n+k+1} + X^{(0,0,1,1)} S_{n+k-1} ) +  X^{(0,0,1,1)}(S_{n+k-1} + X^{(0,0,1,1)} S_{n+k-3} ) + \cdots +\\
 & X^{(0,0,k-1,k-1)}(S_{n-k+3} + X^{(0,0,1,1)} S_{n-k+1} ) +X^{(0,0,k,k)}(S_{n-k+1} + X^{(0,0,1,1)} S_{n-k-1} ) \\
 & -X^{(0,0,1,1)} (S_{n+k-1} + X^{(0,0,1,1)} S_{n+k-3} +\cdots \\
 &   +X^{(0,0,k-2,k-2)} S_{n-k+3} + X^{(0,0,k-1,k-1)} S_{n-k+1})\\
 =&S_{n+k+1} + X^{(0,0,1,1)} S_{n+k-1} +\cdots   +X^{(0,0,k,k)} S_{n-k+1} + X^{(0,0,k+1,k+1)} S_{n-k-1}.     
\end{align*} 
Hence, the proof is finished.
\end{proof}

Now, we  give a representation-theoretic interpretation of the element $S_n$ for any $n\geq 1$.
\begin{proposition}\label{rep}
For $n \geq 1$, we have that
$S_n=X_{R(n)}.$
\end{proposition}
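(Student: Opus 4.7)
The proof will proceed by induction on $n$. The base case $n=1$ is immediate: by the definition of $X_\delta$ given just before the statement, $S_1 = X_\delta = X_{R(1)}$.

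For the inductive step, assume $S_k = X_{R(k)}$ holds for all $k \leq n$. By the recursive definition of $S_{n+1}$ together with the inductive hypothesis,
\[ S_{n+1} = X_{R(n)} X_{R(1)} - X^{(0,0,1,1)} X_{R(n-1)}, \]
so it suffices to establish the cluster multiplication identity
\[ X_{R(n)} X_{R(1)} = X_{R(n+1)} + X^{(0,0,1,1)} X_{R(n-1)}. \]
My plan is to apply Theorem \ref{m-1} to $M = R(n)$ and $N = R(1)$. One has $\dim_{\End(R(n))} \Ext^1(R(n), R(1)) = 1$, with unique non-trivial extension $E = R(n+1)$ coming from $0 \to R(1) \to R(n+1) \to R(n) \to 0$. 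Since $\tau$ fixes each indecomposable of the homogeneous tube, $\tau R(n) = R(n)$, and the corresponding morphism $\theta : R(1) \to R(n)$ is the socle inclusion, with $\ker \theta = 0$ and $\operatorname{coker} \theta \cong R(n-1)$. One decomposes this cokernel as $\tau A \oplus I$ with $I$ the injective summand, verifies the side conditions $\Hom(A \oplus D, I) = 0 = \Ext^1(A, D)$, and then computes the two quantum-twist exponents using the explicit matrices $\Lambda$, $\widetilde I - \widetilde R'$ and the Euler form of the Kronecker quiver in order to extract the desired identity from Theorem \ref{m-1}.

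The principal obstacle is the quantum-twist bookkeeping and in particular the identification of the factor $X^{(0,0,1,1)} = q\,y_1 y_2$: since this is not a pure power of $q$, it cannot come out of the scalar prefactor in Theorem \ref{m-1} alone, and so must instead be produced by the injective data $I$ through $X_{I[-1]}$ inside the cluster character $X_{D \oplus A \oplus I[-1]}$. Getting the injective summand of $\operatorname{coker}\theta$ correctly identified, together with the precise half-integer exponents of $\mathfrak{q}$, is the delicate part. As a safeguard, one can instead verify the multiplication identity directly by expanding both sides via the quantum cluster character formula and matching Laurent monomials: the short exact sequence $0 \to R(n) \to R(n+1) \to R(1) \to 0$ induces a near-bijection between Grassmannians of subrepresentations of $R(n+1)$ and compatible pairs of subrepresentations of $R(n)$ and $R(1)$, and the correction term $X^{(0,0,1,1)} X_{R(n-1)}$ is precisely what compensates for the failure of that bijection along the subrepresentations containing the socle of $R(n+1)$.
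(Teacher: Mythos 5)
Your strategy is the same as the paper's: induct on $n$ and reduce everything to the single identity $X_{R(n)}X_{R(1)}=X_{R(n+1)}+X^{(0,0,1,1)}X_{R(n-1)}$, which you propose to extract from Theorem~\ref{m-1} with $M=R(n)$, $N=R(1)$ via the extension $0\to R(1)\to R(n+1)\to R(n)\to 0$. The base case and the final inductive bookkeeping with $S_nS_1=S_{n+1}+X^{(0,0,1,1)}S_{n-1}$ are exactly as in the paper.

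The gap sits precisely where you flag the ``delicate part''. You compute $\tau R(n)=R(n)$ and $\operatorname{coker}\theta\cong R(n-1)$, which is the answer in the category of $kQ$-modules; there the cokernel has no injective summand, so $I=0$ and Theorem~\ref{m-1} would yield a second term of the form $q^{?}X_{R(n-1)}$, with no mechanism to produce the monomial $X^{(0,0,1,1)}=\mathfrak{q}\,y_1y_2$ --- a tension you correctly notice but do not resolve. The resolution is that the four-term exact sequence $0\to D\to N\to\tau M\to\tau A\oplus I\to 0$ in Theorem~\ref{m-1} lives in the category of $\widetilde{\mathfrak{S}}=k\widetilde{Q}$-modules, where $\widetilde{Q}$ carries the frozen vertices $3$ and $4$: over $k\widetilde{Q}$ the cokernel of $\theta\colon R(1)\to\tau R(n)$ is $\tau R(n-1)\oplus I_3\oplus I_4$, so $D=0$, $A=R(n-1)$ and $I=I_3\oplus I_4$. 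This is the missing identification --- the injective summand is supplied by the frozen vertices --- and since $X_{(I_3\oplus I_4)[-1]}=X^{(0,0,1,1)}$, the second term becomes $X_{R(n-1)\oplus(I_3\oplus I_4)[-1]}=X^{(0,0,1,1)}X_{R(n-1)}$; the paper confirms this (and the absence of stray powers of $q$) by the one-line monomial computation $X^{(0,0,1,1)}X^{-\widetilde{B}\underline{e}-(\widetilde{I}-\widetilde{R}')(n-1,n-1)^{tr}}=X^{-\widetilde{B}\underline{e}-(\widetilde{I}-\widetilde{R}')(n-1,n-1)^{tr}+(0,0,1,1)}$. Your fallback of matching Laurent monomials through the Grassmannian expansion could also be made to work, but as written it is only a sketch; the clean completion is the route you started, with the cokernel computed over $k\widetilde{Q}$ rather than over $kQ$.
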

\begin{proof}
The case when $n=1$ is trivial. Note that we have 
\[ 0 \longrightarrow R(1)  \longrightarrow  R(n+1) \longrightarrow  R(n)  \longrightarrow 0 \]
and since  
$\operatorname{dim} \operatorname{Ext}^1 (  R(n) , R(1)) = \operatorname{dim} \operatorname{Hom} (R(1), \tau R(n) )=1$, we have that
\[ 0 \longrightarrow  R(1)   \longrightarrow  \tau R(n)   \longrightarrow    \tau R(n-1)  \oplus I_3 \oplus I_4  \longrightarrow 0. \]
By Theorem \ref{m-1}, we then have that
$$X_{R(n)} X_{R(1)}=X_{R(n+1)} + X_{R(n-1) \oplus (I_3 \oplus I_4)[-1] }.$$
A direct calculation shows that
$$X^{(0,0,1,1)} X^{-\widetilde{B}\underline{e}-(\widetilde{I}- {\widetilde{R}^{'}})(n-1, n-1)^{tr}} = X^{-\widetilde{B}\underline{e}-(\widetilde{I}- {\widetilde{R}^{'}})(n-1, n-1)^{tr} +(0,0,1,1) }, $$
which implies $X_{R(n)} X_{R(1)}=X_{R(n+1)} + X^{(0,0,1,1)} X_{R(n-1)}.$

Note that $S_n S_1 =S_{n+1} + X^{(0,0,1,1)} S_{n-1}$. Therefore, by induction, we can obtain that 
$S_{n+1}=X_{R(n+1)}$.
\end{proof}

\begin{lemma}\label{induc1} In $\mathcal{A}_q (Q)$,  we have that 

\begin{enumerate}
\item $S_1 x_0=x_{-1}+ q^{-\frac{1}{2}} X^{(1,0,0,1)}$;
\item $S_1 x_{-m}=x_{-(m+1)}+ X^{(0,0,1,1)} x_{-(m-1)}$ for $m\geq 1$.
\end{enumerate}
\end{lemma}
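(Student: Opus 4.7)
The plan is to apply Theorem \ref{m-1} in both parts with $M = R(1)$, exploiting the fact that in the ice quiver $\widetilde{Q}$ the frozen vertices $3$ and $4$ have no outgoing arrows, so the injective $\widetilde{\mathfrak{S}}$-modules $I_3 = S_3$ and $I_4 = S_4$ are simple.

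For (1), I would take $N = V(0) = S_2$. The Euler pairing $\langle R(1), S_2\rangle = -1$ together with $\Hom(R(1), S_2) = 0$ gives $\dim_{\End(R(1))}\Ext^{1}(R(1), S_2) = 1$, and the unique nontrivial extension $0 \to S_2 \to E \to R(1) \to 0$ has middle term $E = V(-1)$. A direct AR-translate computation shows $\underline{\dim}\,\tau R(1) = (1, 1, 1, 1)$, so $\theta\colon S_2 \to \tau R(1)$ is injective: $D = 0$, $A = 0$, and the cokernel has dimension $(1, 0, 1, 1)$; the indecomposability of $\tau R(1)$ forces its structure map from vertex $3$ to vertex $1$ to be nonzero, and this is inherited by the cokernel, which therefore must equal $I_1 \oplus I_4$. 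With $D = A = 0$, the second term of Theorem \ref{m-1} collapses to $q^{c_2}X^{\underline{\dim}\,\soc(I_1 \oplus I_4)} = q^{-1/2}X^{(1, 0, 0, 1)}$, where a short calculation gives $c_1 = \tfrac{1}{2}\Lambda((\widetilde{I}-\widetilde{R}^{'})(1, 1)^{tr}, (\widetilde{I}-\widetilde{R}^{'})(0, 1)^{tr}) = 0$ and $c_2 = c_1 + \tfrac{1}{2}\langle R(1), S_2\rangle = -\tfrac{1}{2}$.

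For (2), I would take $N = V(-m)$ with $m \geq 1$. Again $\Ext^{1}(R(1), V(-m))$ is one-dimensional with middle term $E = V(-(m+1))$. Now $\theta\colon V(-m) \to \tau R(1)$ surjects onto the principal part $R(1)$ with kernel $V(-(m-1))$, giving $D = V(-(m-1))$, $A = 0$, and a cokernel of dimension $(0, 0, 1, 1)$ supported entirely on frozen vertices; since all outgoing arrows from vertices $3$ and $4$ are forced to land in the zero components at vertices $1$ and $2$, this cokernel splits uniquely as $S_3 \oplus S_4 = I_3 \oplus I_4$. The analogous $\Lambda$-calculation gives $c_1 = 0$ and $c_2 = -\tfrac{1}{2}$, so Theorem \ref{m-1} yields
\[ S_1 x_{-m} = x_{-(m+1)} + q^{-1/2} X_{V(-(m-1)) \oplus (I_3 \oplus I_4)[-1]}. \]
The final step is to identify the second term with $X^{(0, 0, 1, 1)} x_{-(m-1)}$; this reduces to verifying the identity $\Lambda\bigl((0, 0, 1, 1),\, -\widetilde{B}\underline{e} - (\widetilde{I}-\widetilde{R}^{'})(m-1, m)^{tr}\bigr) = -1$ for every $\underline{e}$ and every $m \geq 1$, a short matrix computation. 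This identity ensures that left multiplication by $X^{(0, 0, 1, 1)}$ produces a uniform $q^{-1/2}$ twist on every monomial in the expansion of $X_{V(-(m-1))}$, which together with the socle shift $(0, 0, 1, 1) = \underline{\dim}\,\soc(I_3 \oplus I_4)$ reconstructs exactly the cluster-character expansion of $V(-(m-1)) \oplus (I_3 \oplus I_4)[-1]$.

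The main obstacle is identifying the cokernel of $\theta$ as the specific injective module indicated (namely $I_1 \oplus I_4$ in (1) and $I_3 \oplus I_4$ in (2)), rather than merely as some module of the correct dimension vector; this relies crucially on the structural feature of $\widetilde{Q}$ that the cokernel has its relevant outgoing arrows forced to vanish, together with the indecomposability of $\tau R(1)$. As a sanity check one can expand $S_1 = X^{(1,-1,1,1)} + X^{(-1,-1,1,0)} + X^{(-1,1,0,0)}$ and $x_0 = X^{(2,-1,0,1)} + X^{(0,-1,0,0)}$ directly in the quantum torus and verify (1) monomial by monomial; this also serves as the base case for an alternative induction-based proof of (2) using Lemma \ref{coef}.
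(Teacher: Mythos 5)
Your proposal follows the paper's proof essentially verbatim: both parts apply Theorem \ref{m-1} with $M=R(1)$ and $N=V(0)$ resp.\ $N=V(-m)$, identify the cokernel of $\theta$ as $I_1\oplus I_4$ resp.\ $I_3\oplus I_4$ (with $D=A=0$ in the first case and $D=V(-(m-1))$, $A=0$ in the second), and finish part (2) with the same commutation identity $X^{(0,0,1,1)}X^{-\widetilde{B}\underline{e}-(\widetilde{I}-\widetilde{R}^{'})(m-1,m)^{tr}}=q^{-\frac{1}{2}}X^{-\widetilde{B}\underline{e}-(\widetilde{I}-\widetilde{R}^{'})(m-1,m)^{tr}+(0,0,1,1)}$ that the paper uses, and your coefficient values $c_1=0$, $c_2=-\tfrac{1}{2}$ agree with the paper's. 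One small correction: $I_3$ and $I_4$ are simple because the frozen vertices have no \emph{incoming} arrows in $\widetilde{Q}$ (they are sources, with arrows $3\to 1$ and $4\to 2$), not because they lack outgoing arrows.
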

\begin{proof}
(1)\ Note that we have  
\[ 0 \longrightarrow V(0) \longrightarrow V(-1) \longrightarrow R(1) \longrightarrow 0 \]
and since $\operatorname{dim} \operatorname{Ext}^1 (  R(1) , V(0)) = \operatorname{dim} \operatorname{Hom} (V(0), \tau R(1) )=1$, we have
\[ 0 \longrightarrow V(0) \longrightarrow \tau R(1)  \longrightarrow I_1 \oplus I_4  \longrightarrow 0. \]
By Theorem \ref{m-1},  we can then obtain that $S_1 x_0= X_{R(1)} X_{V(0)}=x_{-1}+ q^{-\frac{1}{2}} X^{(1,0,0,1)}$;

(2)\  Note that $\operatorname{dim} \operatorname{Ext}^1 (R(1), V(-m))
=\operatorname{dim} \operatorname{Hom} ( V(-m), \tau R(1))=1$,
we have that
\[ 0 \longrightarrow   V(-m) \longrightarrow  V(-m-1) \longrightarrow R(1)  \longrightarrow 0 \]
and 
\[ 0 \longrightarrow V(-m+1) \longrightarrow  V(-m)  \longrightarrow   \tau R(1)   \longrightarrow  I_3 \oplus I_4  \longrightarrow 0. \]
By Theorem \ref{m-1},  we have that
$S_1 x_{-m}=x_{-(m+1)}+ q^{-\frac{1}{2}} X_{{V(-m+1)} \oplus  (I_3 \oplus I_4)[-1] }.$
Note that
$$X^{(0,0,1,1)} X^{-\widetilde{B}\underline{e}-(\widetilde{I}- {\widetilde{R}^{'}})(m-1, m)^{tr}} = q^{-\frac{1}{2}}X^{-\widetilde{B}\underline{e}-(\widetilde{I}- {\widetilde{R}^{'}})(m-1, m)^{tr} +(0,0,1,1) }, $$
which implies $S_1 x_{-m}=x_{-(m+1)}+ X^{(0,0,1,1)} x_{-(m-1)}$ for $m\geq 1$.
\end{proof}

We are now able to prove the following recursive formulas between modified Chebyshev polynomials of the second kind on 
$X_{\delta}$  and cluster variables.
\begin{theorem}\label{calc2}
In $\mathcal{A}_q (Q)$,  we have that 
  \begin{enumerate}
\item $S_n x_0=x_{-n}+ q^{-\frac{1}{2}} S_{n-1} X^{(1,0,0,1)}$, for  $n \geq 1$;
\item $S_n x_{-m}=x_{-(m+n)}+ X^{(0,0,1,1)} S_{n-1}x_{-(m-1)}$, for $n\geq 1$ and $m \geq 1$;
\item $x_1 S_n =x_{n+1}+ q^{-\frac{1}{2}} x_0 y_1S_{n-1}$, for  $n \geq 1$;
\item $x_m S_n=x_{n+m}+  x_{m-1} X^{(0,0,1,1)}S_{n-1} $, for $n\geq 1$ and $m \geq 2$.
\end{enumerate}
\end{theorem}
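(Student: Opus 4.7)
The plan is to prove the four identities by strong induction on $n$, with (1) and (2) established jointly and (3) and (4) established jointly. For the base case $n=1$, identities (1) and (2) are exactly Lemma \ref{induc1}. The base case of (3) and (4), namely the preinjective analogues
\[ x_1 S_1 = x_2 + q^{-\frac{1}{2}} x_0 y_1, \qquad x_m S_1 = x_{m+1} + x_{m-1} X^{(0,0,1,1)}\ \text{for}\ m \geq 2, \]
must be established separately by running the cluster multiplication machinery on the relevant short exact sequences in the preinjective component together with $R(1)$; specifically, one applies Theorem \ref{m-1} to the short exact sequence $0 \to R(1) \to V(m+1) \to V(m) \to 0$ for $m \geq 2$ (giving (4) at $n=1$) and an analogous computation involving the shifted injective $V(1) = I_1[-1]$ for (3) at $n=1$. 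In each case the monomial produced by the cluster multiplication formula must be identified with $X^{(0,0,1,1)}$ or $q^{-\frac{1}{2}}X^{(1,0,0,1)}$ via a direct computation with the matrix $\Lambda$, exactly as in the closing lines of Lemma \ref{induc1}.

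For the inductive step at $n \geq 2$, the defining recursion $S_n = S_{n-1} S_1 - X^{(0,0,1,1)} S_{n-2}$ reduces a product involving $S_n$ to products involving $S_{n-1}$, $S_{n-2}$ and $S_1$. To prove (1) at level $n$ we compute
\[ S_n x_0 \;=\; S_{n-1}(S_1 x_0) - X^{(0,0,1,1)} S_{n-2} x_0, \]
substitute $S_1 x_0 = x_{-1} + q^{-\frac{1}{2}} X^{(1,0,0,1)}$ from Lemma \ref{induc1}(1), and apply identity (2) at level $n-1$ with $m=1$ to rewrite $S_{n-1} x_{-1}$; the two occurrences of $X^{(0,0,1,1)} S_{n-2} x_0$ then cancel --- commutativity of $X^{(0,0,1,1)}$ with all $S_k$ from Lemma \ref{coef}(3) is what makes this cancellation legitimate --- yielding (1) at level $n$. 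Identity (2) at level $n$ is handled the same way: expand $S_1 x_{-m}$ via Lemma \ref{induc1}(2), then invoke (2) at level $n-1$ for the indices $m+1$ and $m-1$, or identity (1) at level $n-1$ when $m=1$; the same cancellation phenomenon collapses the extra terms. The proofs of (3) and (4) proceed in complete parallel, now with $S_n$ on the right of the cluster variable, using the preinjective base cases in place of Lemma \ref{induc1}.

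The principal obstacle is the base case $n=1$ for identities (3) and (4). The asymmetry between the preprojective and preinjective sides is a consequence of the placement of the frozen vertices $3$ and $4$, so one cannot simply transport Lemma \ref{induc1} by a formal duality. Moreover, because $x_1 = X_{I_1[-1]}$ is the character of a shifted injective rather than of a genuine $\widetilde{\mathfrak{S}}$-module, Theorem \ref{m-1} does not apply directly and one is obliged to invoke Theorem \ref{m-2}, so the coefficient correction produced by the formula takes a slightly different form and must be matched against $x_0 y_1$ by careful bookkeeping with $\Lambda$. Once these base cases are secured, the inductive machinery is a routine manipulation organized entirely by Lemma \ref{coef} and the defining recursion for $S_n$.
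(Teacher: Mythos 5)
Your proposal is correct and follows essentially the same route as the paper: induction on $n$ via the Chebyshev recursion $S_n=S_{n-1}S_1-X^{(0,0,1,1)}S_{n-2}$, with Lemma \ref{induc1} as the base case and Lemma \ref{coef} justifying the cancellations; the only (cosmetic) difference is that you group the product as $S_{n-1}(S_1x_0)$ and invoke (2) at level $n-1$, where the paper groups as $S_1(S_kx_0)$ and runs (1) and (2) as separate inductions. One small correction to your discussion of the base cases for (3)--(4): the issue you flag for $x_1=X_{I_1[-1]}$ applies equally to the case $m=2$ of (4), since $x_2=X_{I_2[-1]}$ is also a shifted injective, so Theorem \ref{m-1} does not literally apply to $0\to R(1)\to V(3)\to V(2)\to 0$ either and Theorem \ref{m-2} (with the usual $q$-power adjustment for the reversed product order) is needed there as well.
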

\begin{proof}
We only prove (1) and (2). The proofs of (3) and (4) are similar.

(1)\ When $n=1$, by Lemma~\ref{induc1} (1), the statement is true. Assume that we have the equality for all positive integers less than or equal to $k$. Then
\begin{align*}
 & S_{k+1}x_0 =  ({S_1}{ S_k}- {X^{(0,0,1,1)}}S_{k-1} )x_0 \\
 = &   S_1 (x_{-k}+ q^{-\frac{1}{2}} S_{k-1} X^{(1,0,0,1)} )       - X^{(0,0,1,1)} ( x_{-(k-1)}+ q^{-\frac{1}{2}} S_{k-2} X^{(1,0,0,1)} )   \\
 =& x_{-(k-1)}+ X^{(0,0,1,1)}x_{-(k-1)} +q^{-\frac{1}{2}} (S_k+ {X^{(0,0,1,1)}}S_{k-2})X^{(1,0,0,1)} \\
 & -X^{(0,0,1,1)} (x_{-(k-1)}+q^{-\frac{1}{2}}  S_{k-2}X^{(1,0,0,1)})\\
 =& x_{-(k+1)}+q^{-\frac{1}{2}}  S_{k}X^{(1,0,0,1)};
  \end{align*} 
(2)\  When $n=1$, by Lemma~\ref{induc1} (2), the statement is true. Assume that we have the equality for all positive integers less than or equal to $k$. Then\begin{align*}
 & S_{k+1}x_{-m} =  ({S_1}{ S_k}- {X^{(0,0,1,1)}}S_{k-1} )x_{-m} \\
 = &   S_1 (x_{-(m+k)}+    {X^{(0,0,1,1)}}S_{k-1} x_{-(m-1)} )    - {X^{(0,0,1,1)}}  (x_{-(m+k-1)}+    {X^{(0,0,1,1)}}S_{k-2} x_{-(m-1)}  )  \\
  =& x_{-(m+k+1)}+ X^{(0,0,1,1)}x_{-(m+k-1)} +{X^{(0,0,1,1)}}(S_k+ {X^{(0,0,1,1)}}S_{k-2} ) x_{-(m-1)}\\
 & -X^{(0,0,1,1)} (x_{-(m+k-1)}+   {X^{(0,0,1,1)}} S_{k-2}x_{-(m-1)}) \\
 =& x_{-(m+k+1)}+ X^{(0,0,1,1)} S_{k} x_{-(m-1)}.
  \end{align*} 
Hence, the proof is finished.
\end{proof}

\begin{remark}
(1)\ In Theorem \ref{calc2} (4), let $n=1$ and $m\geq 2$, then we obtain   $$x_m S_1=x_{m+1}+  x_{m-1} X^{(0,0,1,1)},$$ 
which is exactly the equality in
\cite[Lemma 4.10]{cla}.

(2)\ In Theorem \ref{calc2} (4), let $m=2$ and $n\geq 1$, then we obtain   $$x_2 S_n=x_{n+2}+  x_{1} X^{(0,0,1,1)}S_{n-1}.$$ 
Compare  this equality with 
\cite[Lemma 4.16 (b)]{cla}, it follows that $S_n=s_n$. Thus, by Proposition \ref{rep}, the elements $X_{R(n)}$ give a 
representation-theoretic interpretation of those elements $s_n$ which are expressed in terms of perfect matchings in
\cite{cla}.
\end{remark}

We now prove the following recursive formulas between cluster variables:
\begin{theorem}\label{calc4}  In $\mathcal{A}_q (Q)$,  we have that   
\begin{enumerate}
\item $x_{-(m+n+2)} x_{-m} = q^{-\frac{1}{2}}  S_n X^{(0,0,m,m+1)} + x_{-(n+m+1)}x_{-(m+1)}$, for $m\geq 0$ and $n \geq 0$;
\item $x_{m+n+2} x_{m} = q^{\frac{1}{2}}  S_n X^{(0,0,m,m-1)} + x_{n+m+1}x_{m+1}$, for $m\geq 1$ and $n \geq 0$;
\item $x_0x_{n+2} =q^{\frac{2n+1}{2}} x_1x_{n+1} y_2 + S_n$, for $n\geq 0$;
\item $x_{-m} x_1 =S_{m-1} +  q^{\frac{1}{2}} x_{-(m-1)}  x_0 y_1$, for $m\geq 1$;
\item $x_{-m} x_n =q x_{-(m-1)} x_{n-1}  X^{(0,0,1,1)} + S_{m+n-2}$, for $m\geq 1$ and $n\geq 2$.
\end{enumerate}
\end{theorem}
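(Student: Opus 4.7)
The plan is to prove all five identities by induction, using the multiplication formulas of Theorem \ref{calc2}, the $S_m S_n$ identities of Proposition \ref{calc1}, and the commutation rules of Lemma \ref{coef} to propagate each formula from one value of $n$ (or $m$) to the next. Each identity has the shape ``product of two cluster variables $=$ monomial coefficient $\times\, S_n \,+\,$ product of two earlier cluster variables'', and the inductive step will eliminate the largest-index cluster variable on the left by applying the corresponding part of Theorem \ref{calc2}.

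For the base cases, the smallest allowed value of $n$ reduces each identity to an exchange-type relation between nearby cluster variables (for instance, formula (3) at $n=0$ reads $x_0 x_2 = q^{\frac{1}{2}} x_1^2 y_2 + 1$, which is the quantum mutation relation at vertex $2$ of the initial seed). These base cases can be verified either by direct computation in the quantum torus $\mathcal{T}_{\mathfrak{q}}$ from the explicit Laurent expansions of the cluster variables involved, or by applying Theorem \ref{m-1} (resp.\ Theorem \ref{m-2}) to an appropriate pair of indecomposable preprojective/preinjective $\mathfrak{S}$-modules (resp.\ a module together with a shifted injective) having one-dimensional $\Ext^1$.

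The inductive step for (3) is illustrative of the general pattern. Assuming the formula at index $n$, I would use Theorem \ref{calc2}(4) with $m=2$ and $n$ replaced by $n+1$ to eliminate $x_{n+3}$, writing
\[
x_0 x_{n+3} \;=\; x_0\bigl(x_2 S_{n+1} - x_1 X^{(0,0,1,1)} S_n\bigr),
\]
substitute the base case $x_0 x_2 = q^{\frac{1}{2}} x_1^2 y_2 + 1$, and simplify using Theorem \ref{calc2}(3), the commutation $S_n y_2 = q^{-n} y_2 S_n$ of Lemma \ref{coef}(2), the identity $y_1 y_2 = q^{-1} X^{(0,0,1,1)}$, and the commutativity of $x_0$ with $x_1$ (verifiable directly in $\mathcal{T}_{\mathfrak{q}}$). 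The terms assemble to the desired $n+1$ case. Formulas (1), (2), (4), and (5) admit entirely analogous inductive derivations, each time solving the appropriate part of Theorem \ref{calc2} for the highest-index cluster variable on the left and substituting.

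The main obstacle will be the careful bookkeeping of half-integer powers of $\mathfrak{q}$ that arise from the twisted product: each commutation of $y_1$, $y_2$, $S_n$, or a monomial $X^{(0,0,a,b)}$ past another factor contributes a $q^{\pm k/2}$, and the five formulas differ precisely in how these factors recombine. The bar-invariance of $S_n$ and of $X^{(0,0,1,1)} S_n$ from Lemma \ref{bar-s} forces both sides of each formula to be bar-invariant, providing a valuable consistency check throughout the manipulations.
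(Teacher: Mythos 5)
Your proposal is correct and follows essentially the same route as the paper: base cases from the exchange relations (established via Theorems \ref{m-1} and \ref{m-2} or direct computation in the quantum torus), followed by induction driven by Theorem \ref{calc2}, the Chebyshev recursion, and the commutation rules of Lemma \ref{coef}. The only cosmetic difference is that in the inductive step you solve Theorem \ref{calc2} for the highest-index cluster variable and substitute, whereas the paper typically expands $S_{n+1}=S_nS_1-X^{(0,0,1,1)}S_{n-1}$ and substitutes the inductive hypothesis for $S_n$ and $S_{n-1}$; both reassemble to the same identities, and your worked example for (3) (including the needed checks $x_0x_1=x_1x_0$ and $y_1y_2=q^{-1}X^{(0,0,1,1)}$) is accurate.
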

\begin{proof}
(1)\ Note that
$\operatorname{dim} \operatorname{Ext}^1 (V(-m-2), V(-m))
=\operatorname{dim} \operatorname{Hom} ( V(-m), \tau V(-m-2))=1$.
We have 
\[ 0 \longrightarrow   V(-m) \longrightarrow  2V(-m-1) \longrightarrow V(-m-2)  \longrightarrow 0 \]
and 
\[ 0 \longrightarrow V(-m) \longrightarrow   \tau V(-m-2)  \longrightarrow  mI_3 \oplus (m+1)I_4  \longrightarrow 0. \]
By Theorem \ref{m-1},  we have that
$X_{V(-m-2)} X_{V(-m)} = X^2_{V(-m-1)}+q^{-\frac{1}{2}}  X_{mI_3[-1]\oplus (m+1)I_4[-1]},$
which is exactly the equality
$x_{-(m+2)} x_{-m} = q^{-\frac{1}{2}}  X^{(0,0,m,m+1)} + x^2_{-(m+1)}$. Therefore the statement is true for the case when $n=0$.

When $n=1$, we have 
\begin{align*}
& x_{-(m+3)} x_{-m} = (S_1 x_{-(m+2)} - X^{(0,0,1,1)} x_{-(m+1)} ) x_{-m} \\
=& S_1 (x^2_{-(m+1)} + q^{-\frac{1}{2}}  X^{(0,0,m,m+1)}  ) -X^{(0,0,1,1)} x_{-(m+1)} x_{-m}    \\
=& (x_{-(m+2)}  +   X^{(0,0,1,1)} x_{-m} ) x_{-(m+1)} +q^{-\frac{1}{2}} S_1  X^{(0,0,m,m+1)} - X^{(0,0,1,1)} x_{-(m+1)}  x_{-m}  \\
 =&  x_{-(m+2)}  x_{-(m+1)}  + q^{-\frac{1}{2}} S_1 X^{(0,0,m,m+1)}.
      \end{align*} 
By induction, we then have
\begin{align*}
& q^{-\frac{1}{2}}  S_{n+1} X^{(0,0,m,m+1)} =q^{-\frac{1}{2}}   S_1S_n  X^{(0,0,m,m+1)}  -q^{-\frac{1}{2}} X^{(0,0,1,1)} S_{n-1}  X^{(0,0,m,m+1)}   \\
=& S_1 ( x_{-(m+n+2)}  x_{-m} -  x_{-(m+n+1)}  x_{-(m+1)} ) \\
&- X^{(0,0,1,1)} ( x_{-(m+n+1)}  x_{-m} -  x_{-(m+n)}  x_{-(m+1)} ) \\
=& (x_{-(m+n+3)}   + X^{(0,0,1,1)} x_{-(m+n+1)} ) x_{-m}  -    (x_{-(m+n+2)}   + X^{(0,0,1,1)} x_{-(m+n)} ) x_{-(m+1)}    \\
&-  X^{(0,0,1,1)} (x_{-(m+n+1)}  x_{-m} -x_{-(m+n)}  x_{-(m+1)})  \\
=& x_{-(m+n+3)}  x_{-m} -  x_{-(m+n+2)}  x_{-(m+1)}.
\end{align*} 

(2)\ When $n=0$, it follows from Theorems~\ref{m-1} and~\ref{m-2}. When $n=1$, we have 
\begin{align*}
& x_{m+3} x_{m} = (S_1 x_{m+2} - X^{(0,0,1,1)} x_{m+1} ) x_{m} \\
=& S_1 (x^2_{m+1} + q^{\frac{1}{2}}  X^{(0,0,m,m-1)}  ) -X^{(0,0,1,1)} x_{m+1} x_{m}    \\
=& (x_{m+2}  +   X^{(0,0,1,1)} x_{m} ) x_{m+1} +q^{\frac{1}{2}} S_1  X^{(0,0,m,m-1)} - X^{(0,0,1,1)} x_{m+1}  x_{m}  \\
 =&  x_{m+2}  x_{m+1}  + q^{\frac{1}{2}} S_1 X^{(0,0,m,m-1)}.
      \end{align*} 
By induction, we then have
\begin{align*}
& q^{\frac{1}{2}}  S_{n+1} X^{(0,0,m,m-1)} =q^{\frac{1}{2}}   S_1S_n  X^{(0,0,m,m-1)}  -q^{\frac{1}{2}} X^{(0,0,1,1)} S_{n-1}  X^{(0,0,m,m-1)}   \\
=& S_1 ( x_{n+m+2}  x_{m} -  x_{n+m+1}  x_{m+1} ) \\
&- X^{(0,0,1,1)} ( x_{n+m+1}  x_{m} -  x_{n+m}  x_{m+1} ) \\
=& (x_{n+m+3}   + X^{(0,0,1,1)} x_{n+m+1} ) x_{m}  -    (x_{n+m+2}   + X^{(0,0,1,1)} x_{n+m} ) x_{m+1}    \\
&-  X^{(0,0,1,1)} (x_{n+m+1}  x_{m} -x_{n+m}  x_{m+1})  \\
=& x_{n+m+3}  x_{m} -  x_{n+m+2}  x_{m+1}.
\end{align*} 

(3)\ When $n=0$, it follows from  Theorem \ref{m-2}. When $n=1$, the equality follows from the definition of $S_1$.  According to Lemma \ref{coef} and by induction, we have
\begin{align*}
& S_{n+1}=S_n S_1 - X^{(0,0,1,1)} S_{n-1} \\
=& ( x_0 x_{n+2} - q^{\frac{2n+1}{2}} x_1x_{n+1} y_2)S_1 - ( x_0 x_{n+1} - q^{\frac{2n-1}{2}} x_1x_{n} y_2)X^{(0,0,1,1)} \\
=&x_0 ( x_{n+3} +q x_{n+1} y_1y_2) -q^{\frac{2n+1}{2}}q x_1 x_{n+1} S_1y_2-  ( x_0 x_{n+1} - q^{\frac{2n-1}{2}} x_1x_{n} y_2)qy_1y_2 \\
=& x_0 x_{n+3} +q x_0 x_{n+1} y_1y_2 -q^{\frac{2n+3}{2}}x_1 (x_{n+2} +q x_n y_1y_2 )y_2-(x_0x_{n+1} - q^{\frac{2n-1}{2}} x_1x_{n} y_2)qy_1y_2 \\
=& x_0 x_{n+3} -q^{\frac{2n+3}{2}}x_1 x_{n+2} - q^{\frac{2n+5}{2}}x_1 x_{n}y_1 y_2^2+ q^{\frac{2n+1}{2}}x_1 x_n y_2 y_1y_2\\
=& x_0 x_{n+3} -q^{\frac{2n+3}{2}}x_1 x_{n+2}.
\end{align*}

(4)\  When $m=1$,  it follows from  Theorem \ref{m-2}. 
When $m=2$, we have that
\begin{align*}
& x_{-2} x_1= (S_1 x_{-1}  - X^{(0,0,1,1)}  x_0 )x_1\\
=& S_1 (1 + q^{\frac{1}{2}}  x_0^2 y_1) -  X^{(0,0,1,1)}  x_0 x_1\\
=&  S_1 + q^{\frac{1}{2}} (x_{-1} +  q^{-\frac{1}{2}} X^{(1,0,0,1)} )x_0y_1  - X^{(0,0,1,1)} x_0x_1\\
=& S_1 + q^{\frac{1}{2}}x_{-1}x_0 y_1+x_1y_2x_0y_1-  X^{(0,0,1,1)}  x_0x_1\\
=& S_1+ q^{\frac{1}{2}}x_{-1} x_0y_1.
\end{align*} 

By induction, we have that
\begin{align*}
& x_{-(m+1)} x_1= (  S_1 x_{-m}  - X^{(0,0,1,1)}  x_{-(m-1)} )x_1\\
=& S_1 (S_{m-1} + q^{\frac{1}{2}} x_{-(m-1)} x_0 y_1) -  X^{(0,0,1,1)} ( S_{m-2} + q^{\frac{1}{2}} x_{-(m-2)} x_0 y_1)\\
=&  S_m + X^{(0,0,1,1)} S_{m-2} + q^{\frac{1}{2}} (x_{-m} + X^{(0,0,1,1)} x_{-(m-2)} ) x_0y_1 \\
&- X^{(0,0,1,1)}(S_{m-2} + q^{\frac{1}{2}} x_{-(m-2)}  x_0y_1)\\
=& S_m+ q^{\frac{1}{2}}x_{-m} x_0y_1.
\end{align*} 

(5)\  When $m=1$ and $n\geq 2$, we need to prove
$$x_{-1} x_n=q x_0 x_{n-1} X^{(0,0,1,1)} + S_{n-1}.$$
 When $n=2$, by Lemma \ref{coef}, we have that
\begin{align*}
& x_{-1} x_2= x_{-1} (  x_1 S_1  -  q^{-\frac{1}{2}} x_0y_1 )\\
=&  (q^{\frac{1}{2}} x_0^2 y_1 +1)S_1 -q^{-\frac{1}{2}} x_{-1} x_0 y_1 \\
=& q^{-\frac{1}{2}} x_0^2 S_1 y_1 + S_1 - q^{-\frac{1}{2}} x_{-1} x_0 y_1 \\
=& q^{-\frac{1}{2}} x_0 (x_{-1} + q^{\frac{1}{2}}  X^{(1,0,0,1)} )y_1+ S_1 - q^{-\frac{1}{2}}  x_{-1} x_0 y_1\\
=& x_0  X^{(1,0,0,1)}  X^{(0,0,1,0)} +S_1=q x_0 x_1  X^{(0,0,1,1)}+S_1.
\end{align*} 
By induction, we have that
\begin{align*}
& x_{-1} x_{n+1}= x_{-1} (  x_n S_1  -  x_{n-1}  X^{(0,0,1,1)})\\
=&  (q x_0 x_{n-1}   X^{(0,0,1,1)} +S_{n-1} ) S_1 -(q x_0 x_{n-2}   X^{(0,0,1,1)} +S_{n-2} )  X^{(0,0,1,1)} \\
=& q x_0 (x_{n-1} S_1 )  X^{(0,0,1,1)}  +S_n+S_{n-2}  X^{(0,0,1,1)} - ( q x_0 x_{n-2} X^{(0,0,1,1)} +S_{n-2} ) X^{(0,0,1,1)} \\
=& q x_0 (x_n+x_{n-2} X^{(0,0,1,1)}) X^{(0,0,1,1)} +S_n+S_{n-2}  X^{(0,0,1,1)} \\
& -( q x_0 x_{n-2} X^{(0,0,1,1)} +S_{n-2} ) X^{(0,0,1,1)} \\
=& q x_0 x_n X^{(0,0,1,1)}+S_n.
\end{align*} 

Now we proceed the induction on $m$, we have
\begin{align*}
& x_{-(m+1)} x_n=( S_1 x_{-m} - X^{(0,0,1,1)} x_{-(m-1)} )x_n\\
=& S_1 (q x_{-(m-1)} x_{n-1}  X^{(0,0,1,1)} +S_{m+n-2} ) -  X^{(0,0,1,1)} (q x_{-(m-2)} x_{n-1}  X^{(0,0,1,1)} +S_{m+n-3} )\\
=& q (x_{-m} +X^{(0,0,1,1)} x_{-(m-2)}  ) x_{n-1}  X^{(0,0,1,1)} +S_{m+n-1} + X^{(0,0,1,1)} S_{m+n-3} \\
&- X^{(0,0,1,1)} ( q x_{-(m-2)} x_{n-1}  X^{(0,0,1,1)} +    S_{m+n-3}    )\\
=& q x_{-m} x_{n-1}  X^{(0,0,1,1)} + S_{m+n-1}. 
\end{align*} 
Thus, the proof is completed.
\end{proof}

\begin{remark}
According to   \cite[Definition 4.15, Remark 4.18, Theorem 4.20] {cla}, it follows that the equality in \cite[Lemma 4.16 (a)]{cla} is exactly $x_{n+3} x_{1} = q^{\frac{1}{2}}  S_n y_1 + x_{n+2}x_{2}$ which is a special case in Theorem \ref{calc4} (2) for $m=1$.
\end{remark}

In order to study atomic bases in the rank two cluster algebras of affine types, Sherman and Zelevinsky~\cite{SZ} introduced the $n$-th Chebyshev polynomials of the first kind $F_{n}(x)$. We modify the  definition of $F_{n}(x)$ for $n\in \mathbb{Z}_{\geq 0}$ on $X_\delta$ as follows
  $$
  F_0(X_\delta)=1,F_1(X_\delta)=X_\delta, F_2(X_\delta)=F_1(X_\delta)F_1(X_\delta)-2X^{(0,0,1,1)},$$
  $$F_{n+1}(X_\delta)=F_n(X_\delta)F_1(X_\delta)-X^{(0,0,1,1)} F_{n-1}(X_\delta) \ \ for\ n\geq 2.
 $$
In the following, we denote $F_n(X_\delta)$ by $F_n$ for simplicity. According to  the definition of $F_n$ and the fact that $F_1\in \mathcal{A}_q (Q)$, it is easy to see  $F_n\in \mathcal{A}_q (Q)$ for any $n\geq 1$.

\begin{lemma}\label{first-0}  In $\mathcal{A}_q (Q)$, we have that
\begin{enumerate}
\item $F_m x_0=x_{-m} +  q^{-\frac{2m-1}{2}}y_2 x_m$, for $m\geq 1$;
\item  $F_m x_{-1} =x_{-(m+1)} + q^{-\frac{2m-1}{2}}   X^{(0,0,1,2)} x_{m-1}$, for $m\geq 2$;
\item $F_2 x_{-2} =x_{-4} + X^{(0,0,2,2)}x_0$, and  $F_m x_{-2} =x_{-(m+2)} + q^{-\frac{2m-1}{2}}   X^{(0,0,2,3)} x_{m-2}$, for $m\geq 3$.
\end{enumerate}
\end{lemma}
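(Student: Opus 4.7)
I would prove (1), (2), (3) using the auxiliary identity
\begin{equation*}
F_n = S_n - X^{(0,0,1,1)} S_{n-2}\qquad (n\ge 2),
\end{equation*}
which follows by induction from the two defining recursions and the multiplication formula $S_nS_1=S_{n+1}+X^{(0,0,1,1)}S_{n-1}$ of Proposition \ref{calc1}. This reduces every $F_m$-product to two $S_k$-products already handled in Theorem \ref{calc2} and Lemma \ref{induc1}. The remaining inputs are the commutations $X^{(0,0,1,1)}F_n=F_nX^{(0,0,1,1)}$ and $y_2F_n=q^nF_ny_2$ (both from Lemma \ref{coef} together with $y_2X^{(0,0,1,1)}=q^2X^{(0,0,1,1)}y_2$); the bar-conjugate of Theorem \ref{calc2}(3), namely $S_nx_1=x_{n+1}+q^{1/2}S_{n-1}y_1x_0$ for $n\ge 1$; and the monomial identities $y_2x_1=X^{(1,0,0,1)}$, $y_2y_1=qX^{(0,0,1,1)}$, $X^{(0,0,1,1)}y_2=q^{-1}X^{(0,0,1,2)}$, $X^{(0,0,1,1)}X^{(0,0,1,2)}=q^{-1}X^{(0,0,2,3)}$, all extracted from the explicit form of $\Lambda$.

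The plan is to prove (1) first, then derive (2) from (1), and (3) from (2). For (1), the cases $m=1,2$ are direct checks. For $m\ge 3$, Theorem \ref{calc2}(1) applied to both $S_mx_0$ and $S_{m-2}x_0$ yields
\begin{equation*}
F_m x_0 = x_{-m} + q^{-1/2}F_{m-1}X^{(1,0,0,1)} - X^{(0,0,1,1)}x_{-(m-2)}.
\end{equation*}
Writing $X^{(1,0,0,1)}=y_2x_1$ and moving $y_2$ past $F_{m-1}$, the middle term becomes $q^{-(2m-1)/2}y_2F_{m-1}x_1$. Expanding
\begin{equation*}
F_{m-1}x_1 = x_m + q^{1/2}F_{m-2}y_1x_0 - X^{(0,0,1,1)}x_{m-2}
\end{equation*}
via the bar-conjugate $S_nx_1$-formula and then simplifying with $y_2y_1=qX^{(0,0,1,1)}$ and $y_2X^{(0,0,1,1)}=q^2X^{(0,0,1,1)}y_2$, the inductive hypothesis (1) for $F_{m-2}x_0$ makes the $y_2x_{m-2}$-contributions cancel and leaves $q^{-(2m-1)/2}y_2F_{m-1}x_1=q^{-(2m-1)/2}y_2x_m+X^{(0,0,1,1)}x_{-(m-2)}$. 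Substituting gives (1).

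For (2) and (3), the same template yields
\begin{equation*}
F_m x_{-1} = x_{-(m+1)} - X^{(0,0,1,1)}x_{-(m-1)} + X^{(0,0,1,1)}F_{m-1}x_0
\end{equation*}
and
\begin{equation*}
F_m x_{-2} = x_{-(m+2)} - X^{(0,0,1,1)}x_{-m} + X^{(0,0,1,1)}F_{m-1}x_{-1}
\end{equation*}
for $m\ge 3$, via Theorem \ref{calc2}(2). Feeding in (1) for $F_{m-1}x_0$ together with $X^{(0,0,1,1)}y_2=q^{-1}X^{(0,0,1,2)}$ proves (2); feeding in (2) for $F_{m-1}x_{-1}$ together with $X^{(0,0,1,1)}X^{(0,0,1,2)}=q^{-1}X^{(0,0,2,3)}$ proves (3). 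The small cases $m=2$ of both are direct verifications, since $S_0=1$ collapses the middle term.

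The main obstacle is the inductive step of (1): because $F_{m-1}x_1$ is not among the statements of the lemma, one is forced to detour through the bar-conjugate of Theorem \ref{calc2}(3) and to track $q$-powers carefully across several $\Lambda$-commutations. Once the commutation $F_{m-1}y_2=q^{-(m-1)}y_2F_{m-1}$ and the identity $y_2y_1=qX^{(0,0,1,1)}$ are in hand, the potentially obstructive $y_2x_{m-2}$-terms cancel exactly and the remaining bookkeeping is routine.
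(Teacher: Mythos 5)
Your proposal is correct, but it follows a genuinely different route from the paper. The paper proves each of (1)--(3) by a self-contained induction on $m$: it verifies the base cases $m=1,2,3$ (and $m=4$ for part (3)) by hand and then runs the inductive step directly through the recursion $F_{m+1}=X_\delta F_m - X^{(0,0,1,1)}F_{m-1}$ together with the elementary $X_\delta$-multiplication rules of Lemma~\ref{induc1}, never invoking the $S_n$-formulas of Theorem~\ref{calc2}. You instead factor everything through the transition identity $F_n=S_n-X^{(0,0,1,1)}S_{n-2}$ (which is valid, commutes correctly since $X^{(0,0,1,1)}$ is central among the $S_k$ by Lemma~\ref{coef}, and follows by the induction you indicate), so that each product $F_mx_{-j}$ collapses to two instances of Theorem~\ref{calc2}; your monomial identities $y_2x_1=X^{(1,0,0,1)}$, $y_2y_1=qX^{(0,0,1,1)}$, $X^{(0,0,1,1)}y_2=q^{-1}X^{(0,0,1,2)}$, $X^{(0,0,1,1)}X^{(0,0,1,2)}=q^{-1}X^{(0,0,2,3)}$ and the commutation $y_2F_n=q^nF_ny_2$ all check out against the explicit $\Lambda$, and the cancellation of the $y_2x_{m-2}$-terms in the step for (1) is exact. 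What your approach buys is economy and transparency of logical structure --- Theorem~\ref{calc2} is reused rather than re-derived, and the dependence $(3)\leftarrow(2)\leftarrow(1)$ is explicit --- at the cost of the detour through $F_{m-1}x_1$ via the bar-conjugate of Theorem~\ref{calc2}(3) and a step-two induction in (1); the paper's version is longer and more repetitive but entirely uniform and needs no auxiliary identity. One small point worth making explicit if you write this up: the bar-conjugation argument requires the bar-invariance of $S_n$, $x_k$, $y_1$ from Lemma~\ref{bar-s} and of cluster variables and coefficients, which you use implicitly.
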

\begin{proof}
(1)\ The case when $m=1$ follows from Lemma~\ref{induc1} (1).  When $m=2$, we have 
\begin{align*}
&F_2 x_0=X_\delta^2 x_0-2 X^{(0,0,1,1)} x_0\\
=&X_\delta (x_{-1} + q^{-\frac{1}{2}}  y_2x_1) -2 X^{(0,0,1,1)} x_0\\
=& x_{-2} +X^{(0,0,1,1)} x_0 +  q^{-\frac{3}{2}}  y_2  (x_2+ q^{\frac{1}{2}}y_1  x_0) -2 X^{(0,0,1,1)} x_0\\
=& x_{-2}+ q^{-\frac{3}{2}}y_2x_2. 
\end{align*} 
When $m=3$, we have 
\begin{align*}
&F_3 x_0=(X_\delta F_2 -X^{(0,0,1,1)} X_\delta) x_0\\
=&X_\delta (x_{-2} + q^{-\frac{3}{2}}  y_2x_2) -X^{(0,0,1,1)} (x_{-1}  + q^{-\frac{1}{2}}y_2x_1)\\
=& x_{-3} +X^{(0,0,1,1)} x_{-1} +  q^{-\frac{5}{2}}  y_2  (x_3+X^{(0,0,1,1)} x_1) - X^{(0,0,1,1)} (x_{-1}  + q^{-\frac{1}{2}}y_2x_1)\\
=& x_{-3}+ q^{-\frac{5}{2}}y_2x_3. 
\end{align*} 

Suppose that $F_m x_0=x_{-m} +  q^{-\frac{2m-1}{2}}y_2 x_m.$ Then we have that
\begin{align*}
&F_{m+1} x_0=(X_\delta F_m -X^{(0,0,1,1)} F_{m-1}) x_0\\
=&X_\delta (x_{-m} + q^{-\frac{2m-1}{2}}  y_2x_m) -X^{(0,0,1,1)} (x_{-(m-1)}  + q^{-\frac{2m-3}{2}}y_2x_{m-1})\\
=& x_{-(m+1)} +X^{(0,0,1,1)} x_{-(m-1)} +  q^{-\frac{2m+1}{2}}  y_2  (x_{m+1}+X^{(0,0,1,1)} x_{m-1}) \\
&- X^{(0,0,1,1)} (x_{-(m-1)}  + q^{-\frac{2m-3}{2}}y_2x_{m-1})\\
=& x_{-(m+1)}+ q^{-\frac{2m+1}{2}}y_2x_{m+1}. 
\end{align*} 

(2)\ When $m=2$, we have 
\begin{align*}
&F_2 x_{-1}=X_\delta^2 x_{-1}-2 X^{(0,0,1,1)} x_{-1}\\
=&X_\delta (x_{-2} + X^{(0,0,1,1)}x_0) -2 X^{(0,0,1,1)} x_{-1}\\
=& x_{-3} +X^{(0,0,1,1)} x_{-1} +X^{(0,0,1,1)} (x_{-1}+ q^{-\frac{1}{2}}y_2  x_1) -2 X^{(0,0,1,1)} x_{-1}\\
=& x_{-3}+ q^{-\frac{3}{2}} X^{(0,0,1,2)} x_1 . 
\end{align*} 

When $m=3$, we have 
\begin{align*}
&F_3 x_{-1}=X_\delta F_2 x_{-1} -X^{(0,0,1,1)} X_\delta x_{-1}\\
=&X_\delta (x_{-3} + q^{-\frac{3}{2}}  X^{(0,0,1,2)}  x_1 ) -X^{(0,0,1,1)} (x_{-2}  + X^{(0,0,1,1)}  x_0)\\
=& x_{-4} +X^{(0,0,1,1)} x_{-2} +  q^{-\frac{5}{2}}  X^{(0,0,1,2)}  (x_2+ q^{\frac{1}{2}}y_1 x_0) - X^{(0,0,1,1)} (x_{-2}  + X^{(0,0,1,1)}  x_0)\\
=& x_{-4}+ q^{-\frac{5}{2}}  X^{(0,0,1,2)}  x_2.
\end{align*} 
Suppose that $F_m x_{-1} =x_{-(m+1)} + q^{-\frac{2m-1}{2}}   X^{(0,0,1,2)} x_{m-1}$.  Then we have that
\begin{align*}
&F_{m+1} x_{-1}=X_\delta F_m x_{-1} -X^{(0,0,1,1)} F_{m-1} x_{-1}\\
=&X_\delta (x_{-(m+1)} + q^{-\frac{2m-1}{2}}  X^{(0,0,1,2)}  x_{m-1} ) -X^{(0,0,1,1)} ( x_{-m}  +q^{-\frac{2m-3}{2}} X^{(0,0,1,2)}  x_{m-2} )\\
=& x_{-(m+2)} +X^{(0,0,1,1)} x_{-m} +  q^{-\frac{2m+1}{2}}  X^{(0,0,1,2)}  (x_m+ -X^{(0,0,1,1)} x_{m-2}  ) \\
&- X^{(0,0,1,1)} ( x_{-m}  +q^{-\frac{2m-3}{2}} X^{(0,0,1,2)}  x_{m-2} )\\
=& x_{-(m+2)}+ q^{-\frac{2m+1}{2}}  X^{(0,0,1,2)}  x_m.
\end{align*} 

(3)\ The first equality can be calculated as follows
\begin{align*}
&F_2 x_{-2}=X_\delta^2 x_{-2}- 2 X^{(0,0,1,1)} x_{-2}\\
=&X_\delta (x_{-3} + X^{(0,0,1,1)}x_{-1}) -   2 X^{(0,0,1,1)} x_{-2}   \\
=& x_{-4} +X^{(0,0,1,1)} x_{-2} +X^{(0,0,1,1)} (x_{-2}+X^{(0,0,1,1)} x_0 ) -2 X^{(0,0,1,1)} x_{-2}\\
=& x_{-4}+ X^{(0,0,2,2)} x_0. 
\end{align*} 

When $m=3$, we have that 
\begin{align*}
&F_3 x_{-2}=X_\delta F_2 x_{-2}-X^{(0,0,1,1)} X_\delta x_{-2}\\
=&X_\delta (x_{-4} + X^{(0,0,2,2)}x_{0}) - X^{(0,0,1,1)}  (x_{-3} +X^{(0,0,1,1)} x_{-1})        \\
=& x_{-5} +X^{(0,0,1,1)} x_{-3} +X^{(0,0,2,2)} (x_{-1}+   q^{-\frac{1}{2}}   y_2x_1)-  X^{(0,0,1,1)}  (x_{-3} +X^{(0,0,1,1)} x_{-1})  \\
=& x_{-5}+q^{-\frac{5}{2}}X^{(0,0,2,3)} x_1. 
\end{align*} 

When $m=4$, we have that 
\begin{align*}
&F_4 x_{-2}=X_\delta F_3 x_{-2}-X^{(0,0,1,1)} F_2  x_{-2}\\
=&X_\delta (x_{-5} +  q^{-\frac{5}{2}} X^{(0,0,2,3)} x_1) - X^{(0,0,1,1)}  (x_{-4} +X^{(0,0,2,2)} x_{0})        \\
=& x_{-6} +X^{(0,0,1,1)} x_{-4} +q^{-\frac{7}{2}} X^{(0,0,2,3)} (x_{2}+   q^{\frac{1}{2}}   y_1x_0)-   X^{(0,0,1,1)}  (x_{-4} +X^{(0,0,2,2)} x_{0})        \\
=& x_{-6}+q^{-\frac{7}{2}}X^{(0,0,2,3)} x_2. 
\end{align*} 
Suppose that $F_m x_{-2} =x_{-(m+2)} + q^{-\frac{2m-1}{2}}   X^{(0,0,2,3)} x_{m-2}$.
Then we have that
\begin{align*}
&F_{m+1} x_{-2}=X_\delta F_m x_{-2}-X^{(0,0,1,1)} F_{m-1}  x_{-2}\\
=&X_\delta (x_{-(m+2)} +  q^{-\frac{2m-1}{2}} X^{(0,0,2,3)} x_{m-2}) - X^{(0,0,1,1)}  (x_{-(m+1)} +   q^{-\frac{2m-3}{2}} X^{(0,0,2,3)} x_{m-3})        \\
=& x_{-(m+3)} +X^{(0,0,1,1)} x_{-(m+1)} +q^{-\frac{2m+1}{2}} X^{(0,0,2,3)} (x_{m-1}+X^{(0,0,1,1)} x_{m-3}) \\
&- X^{(0,0,1,1)}  (x_{-(m+1)} +   q^{-\frac{2m-3}{2}} X^{(0,0,2,3)} x_{m-3})        \\
=& x_{-(m+3)}+q^{-\frac{2m+1}{2}}X^{(0,0,2,3)} x_{m-1}. 
\end{align*} 
Hence, the proof is finished.
\end{proof}

Similarly, we have the following results.
\begin{lemma}\label{first-1}
In $\mathcal{A}_q (Q)$,  we have that
\begin{enumerate}
\item $x_1 F_m=x_{m+1} + q^{-\frac{2m-1}{2}} x_{-(m-1)} y_1$, for $m \geq 1$;
\item $x_2 F_m=x_{m+2} + q^{-\frac{2m-1}{2}} x_{-(m-2)}  X^{(0,0,2,1)}$, for $m\geq 2$.
\end{enumerate}
\end{lemma}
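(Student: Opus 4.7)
The plan is to prove both (1) and (2) by induction on $m$, in direct parallel with the proof of Lemma~\ref{first-0}. For (1), the base case $m=1$ is immediate from Theorem~\ref{calc2}(3) with $n=1$, which gives $x_1 F_1 = x_1 S_1 = x_2 + q^{-1/2} x_0 y_1$. For the inductive step, the crucial preliminary observation is that $X^{(0,0,1,1)}$ commutes with $X_\delta$ (easily checked from the matrix $\Lambda$, and already implicit in Lemma~\ref{coef}(3)), and hence with every $F_{m-1}$ by the defining recurrence. This lets me write
\[
x_1 F_{m+1} \;=\; (x_1 F_m) X_\delta - (x_1 X^{(0,0,1,1)}) F_{m-1} \;=\; (x_1 F_m) X_\delta - q^{-1} X^{(0,0,1,1)} (x_1 F_{m-1}),
\]
where $x_1 X^{(0,0,1,1)} = q^{-1} X^{(0,0,1,1)} x_1$ is read off from $\Lambda$. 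Substituting the induction hypothesis reduces the problem to expanding known products.

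To finish the inductive step, I would expand $x_{m+1} X_\delta = x_{m+2} + x_m X^{(0,0,1,1)}$ via Theorem~\ref{calc2}(4) with $n=1$, and rewrite $x_{-(m-1)} y_1 X_\delta$ by first applying Lemma~\ref{coef}(1) to move $X_\delta$ past $y_1$ (with factor $q^{-1}$), then using the identity
\[
x_{-(m-1)} S_1 \;=\; x_{-m} + x_{-(m-2)} X^{(0,0,1,1)},
\]
obtained from Lemma~\ref{induc1}(2) by the bar-involution (Lemma~\ref{bar-s}; cluster variables and $X^{(0,0,1,1)}$ are bar-invariant, and the bar reverses order of multiplication). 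The remaining simplifications use the $\Lambda$-determined factors $x_m X^{(0,0,1,1)} = q^{-1} X^{(0,0,1,1)} x_m$, $x_{-(m-2)} X^{(0,0,1,1)} = q\, X^{(0,0,1,1)} x_{-(m-2)}$, and $X^{(0,0,1,1)} y_1 = q^2 y_1 X^{(0,0,1,1)}$. After collecting, the contributions involving $x_m X^{(0,0,1,1)}$ cancel against the $-q^{-1} X^{(0,0,1,1)} x_m$ term, and the two $x_{-(m-2)} y_1 X^{(0,0,1,1)}$ contributions cancel, leaving precisely $x_{m+2} + q^{-(2m+1)/2} x_{-m} y_1$.

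Part (2) is handled by the same template, with $x_1$ replaced by $x_2$ and $y_1$ replaced by $X^{(0,0,2,1)}$ (which is the natural monomial associated to vertex $2$, arising from the corresponding quasi-commutations). The base case $m=2$ is computed directly from $F_2 = X_\delta^2 - 2X^{(0,0,1,1)}$ together with $x_2 X_\delta = x_3 + x_1 X^{(0,0,1,1)}$ from Theorem~\ref{calc2}(4); the inductive step parallels (1) verbatim. The main obstacle is not conceptual but combinatorial: bookkeeping the various quasi-commutation factors between non-monomial cluster variables $x_k$ and the monomial coefficients $y_1, y_2, X^{(0,0,1,1)}, X^{(0,0,2,1)}$. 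In particular, the sign of the quasi-commutation exponent of $X^{(0,0,1,1)}$ with $x_k$ flips depending on whether $k \geq 1$ (preinjective-type, factor $q^{-1}$) or $k \leq 0$ (preprojective-type, factor $q$), and keeping this pattern straight in the telescoping cancellation is what makes the computation long but routine.
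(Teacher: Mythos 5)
Your proof is correct and is exactly the induction the paper has in mind: the paper gives no proof of this lemma, writing only ``Similarly,'' in reference to Lemma~\ref{first-0}, and your argument is the right-handed analogue of that proof (your quasi-commutation exponents for $X^{(0,0,1,1)}$ against preinjective versus preprojective cluster variables, and the bar-involution trick to convert Lemma~\ref{induc1}(2) into $x_{-(m-1)}S_1=x_{-m}+x_{-(m-2)}X^{(0,0,1,1)}$, all check out against $\Lambda$). One small point to add: since the recurrence $F_{m+1}=F_mF_1-X^{(0,0,1,1)}F_{m-1}$ is only valid for $m\geq 2$, part (1) needs $m=2$ as a second base case, computed directly from $F_2=X_\delta^2-2X^{(0,0,1,1)}$ exactly as you do for the base case of part (2); it yields $x_1F_2=x_3+q^{-\frac{3}{2}}x_{-1}y_1$, after which your inductive step applies verbatim.
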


We can now prove  multiplication formulas between the modified Chebyshev polynomials of the first kind on $X_\delta$ and cluster variables.
\begin{theorem}\label{main} In $\mathcal{A}_q (Q)$,  we have that 
\begin{enumerate}
\item 
\[F_m x_{-n}=\begin{cases}
x_{-(n+m)} + X^{(0,0,m,m)}  x_{-(n-m)} & \text{for\ \  $1\leq m\leq n$;}\\
x_{-(n+m)} + q^{-\frac{2m-1}{2}}X^{(0,0,n,n+1)}  x_{m-n}& \text{for\ \ $n\geq 0,\ n+1\leq m$.}
\end{cases}
\]
\item
\[{\hspace{-1.5cm} } x_n F_m=\begin{cases}
x_{n+m} +  x_{n-m} X^{(0,0,m,m)} & \text{for\ \  $1\leq m< n$;}\\
x_{n+m} + q^{-\frac{2m-1}{2}} x_{-(m-n)} X^{(0,0,n,n-1)}& \text{for\ \ $1\leq n\leq m$.}
\end{cases}
\]
\end{enumerate}  

\end{theorem}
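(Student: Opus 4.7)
My plan is to prove part (1) by induction on $n$ with $m$ fixed, using the base cases $n=0,1,2$ from Lemma \ref{first-0}, and to obtain part (2) by a mirror argument with base cases $n=1,2$ from Lemma \ref{first-1}.

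The engine of the induction for (1) is the three-term recurrence
\[
x_{-(n+1)} = X_\delta \, x_{-n} - X^{(0,0,1,1)} x_{-(n-1)} \qquad (n \geq 1),
\]
obtained by rearranging Lemma \ref{induc1}(2). Since $F_m$ is a polynomial in $X_\delta = S_1$ and $X^{(0,0,1,1)}$, and these two commute by Lemma \ref{coef}(3), $F_m$ commutes with both, yielding
\[
F_m x_{-(n+1)} = X_\delta \cdot F_m x_{-n} - X^{(0,0,1,1)} \cdot F_m x_{-(n-1)}.
\]
I would then substitute the inductive hypothesis for $F_m x_{-n}$ and $F_m x_{-(n-1)}$ and re-expand $X_\delta$ acting on each resulting cluster variable via Lemma \ref{induc1}(1)--(2).

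The computation splits into subcases according to how $m$ compares with $n$. The generic subcases are $m \leq n-1$ (where all three of $F_m x_{-n}$, $F_m x_{-(n-1)}$, $F_m x_{-(n+1)}$ lie in Case A) and $m \geq n+2$ (all in Case B); both are handled by direct substitution, the telescoping being powered by the commutation $X^{(0,0,1,1)} X^{(0,0,k,k)} = X^{(0,0,k+1,k+1)}$, which holds because $\Lambda$ vanishes on such pairs. The transitional subcases $m \in \{n, n+1\}$ mix Case A and Case B inputs with possibly opposite-type outputs; here one must additionally invoke Lemma \ref{induc1}(1), $X_\delta x_0 = x_{-1} + q^{-1/2} X^{(1,0,0,1)}$, which injects a twisted monomial supported on coordinates $\{1,3,4\}$. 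Part (2) is handled identically via the mirror recurrence $x_{n+1} = x_n X_\delta - x_{n-1} X^{(0,0,1,1)}$ (the $n=1$ specialization of Theorem \ref{calc2}(4) for $m \geq 2$), together with Theorem \ref{calc2}(3) for the transitional subcase $n=1$.

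The main obstacle will be tracking the $q$-twists in the transitional subcases. The residual monomials there must be shown to equal $q^{-(2m-1)/2} X^{(0,0,n,n+1)} x_{m-n}$ (respectively $q^{-(2m-1)/2} x_{-(m-n)} X^{(0,0,n,n-1)}$ for part (2)). This reduces to computing a handful of $\Lambda$-pairings — for instance $\Lambda\bigl((0,0,n,n+1),\, e_1\bigr) = n$ and $\Lambda\bigl((0,0,a,a),\,(1,0,0,1)\bigr) = -a$ — and verifying that the various powers of $q^{1/2}$ accumulated along the inductive step combine into precisely the stated exponent.
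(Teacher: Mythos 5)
Your plan is sound and all the ingredients you cite are the right ones; the only structural difference from the paper is the choice of induction variable. The paper fixes the argument $x_{-(n+1)}$ (for $n\geq 2$) and inducts on the Chebyshev index via $F_{k+1}=X_\delta F_k-X^{(0,0,1,1)}F_{k-1}$, first running the induction through the regime $k\leq n+1$ (Case A), then crossing into Case B at $k=n+2$ by invoking $X_\delta x_{0}=x_{-1}+q^{-1/2}X^{(1,0,0,1)}$, and continuing with a second induction for $k\geq n+2$. You instead fix $m$ and induct on $n$ via the dual recurrence $x_{-(n+1)}=X_\delta x_{-n}-X^{(0,0,1,1)}x_{-(n-1)}$, pulling $F_m$ across it by the commutation of $F_m$ with $X_\delta$ and $X^{(0,0,1,1)}$ --- which does follow from Lemma \ref{coef}, since $F_m$ is by definition a polynomial in $S_1$ and $X^{(0,0,1,1)}$. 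Because the two recurrences have the same shape, the telescoping computations are essentially transposes of each other, and both arguments rest on the same base cases (Lemmas \ref{induc1}, \ref{first-0}, \ref{first-1}) and the same boundary bookkeeping: your transitional steps $m\in\{n,n+1\}$ correspond to the paper's steps $k=n+2,n+3,n+4$, where the stray monomial coming from $X_\delta x_0$ must be matched against $q^{-\frac{2m-1}{2}}X^{(0,0,n,n+1)}x_{m-n}$; your sample $\Lambda$-pairings are correct and this cancellation does work out. One small point to watch when writing out the Case-B steps: you need $X_\delta x_k$ for positive $k$ with $X_\delta$ on the \emph{left}, while Theorem \ref{calc2}(3)--(4) places $S_1$ on the right; applying the bar-involution (Lemma \ref{bar-s} plus bar-invariance of cluster variables) supplies the left-handed versions, which the paper also uses implicitly.
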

\begin{proof}
(1)\  The cases when $n=0, 1, 2$  follow from Lemmas~\ref{induc1} and~\ref{first-0}.  Assume that $n \geq 2$,  then
we have $X_\delta x_{-(n+1) }= x_{-(n+2)} + X^{(0,0,1,1)} x_{-n}$, and 
\begin{align*}
 & F_2 x_{-(n+1) }= X_\delta^2 x_{-(n+1) }-2 X^{(0,0,1,1)} x_{-(n+1)}      \\ 
= &  X_\delta (x_{-(n+2)} + X^{(0,0,1,1)}  x_{-n}  ) -2 X^{(0,0,1,1)} x_{-(n+1)}      \\ 
=& x_{-(n+3)} + X^{(0,0,1,1)}  x_{-(n+1)}   +X^{(0,0,1,1)} (x_{-(n+1)}  +  X^{(0,0,1,1)} x_{-(n-1)}  )-2 X^{(0,0,1,1)} x_{-(n+1)}   \\
=&  x_{-(n+3)} + X^{(0,0,2,2)}  x_{-(n-1)}.
\end{align*} 
Suppose that $F_{k} x_{-(n+1)} =x_{-(n+1+k)} + X^{(0,0,k,k)} x_{-(n+1-k)}$, for $k\leq n$, then
we have that
\begin{align*}
 &F_{k+1} x_{-(n+1)}= (X_\delta F_{k} - X^{(0,0,1,1)} F_{k-1} ) x_{-(n+1)} \\
 =& X_\delta (x_{-(n+1+k)} + X^{(0,0,k,k)} x_{-(n+1-k)}) -X^{(0,0,1,1)} (x_{-(n+k)} + X^{(0,0,k-1,k-1)} x_{-(n+2-k)})  \\
 =& x_{-(n+2+k)} + X^{(0,0,1,1)}  x_{-(n+k)} + X^{(0,0,k,k)} (x_{-(n+2-k)} + X^{(0,0,1,1)} x_{-(n-k)}) \\
 &- X^{(0,0,1,1)} (x_{-(n+k)} + X^{(0,0,k-1,k-1)} x_{-(n+2-k)}) \\
=& x_{-(n+2+k)} + X^{(0,0,k+1,k+1)} x_{-(n-k)};
     \end{align*} 
Thus, 
\begin{align*}
 &F_{n+2} x_{-(n+1)}= (X_\delta F_{n+1} - X^{(0,0,1,1)} F_{n} ) x_{-(n+1)} \\
 =& X_\delta (x_{-(2n+2)} + X^{(0,0,n+1,n+1)} x_{0}) -X^{(0,0,1,1)} (x_{-(2n+1)} + X^{(0,0,n,n)} x_{-1})  \\
 =& x_{-(2n+3)} + X^{(0,0,1,1)}  x_{-(2n+1)} + X^{(0,0,n+1,n+1)} (x_{-1} + q^{-\frac{1}{2}}  y_2 x_1) \\
 &- X^{(0,0,1,1)} (x_{-(2n+1)} + X^{(0,0,n,n)} x_{-1})  \\
  =& x_{-(2n+3)} +  q^{-\frac{2n+3}{2}}X^{(0,0,n+1,n+2)} x_1;
 \end{align*} 
Similarly, we have that
 $$F_{n+3} x_{-(n+1)}=x_{-(2n+4)} +  q^{-\frac{2n+5}{2}}X^{(0,0,n+1,n+2)} x_2;$$
 $$F_{n+4} x_{-(n+1)} =x_{-(2n+5)} +  q^{-\frac{2n+7}{2}}X^{(0,0,n+1,n+2)} x_3.$$

Now, for $m \geq n+2$,  suppose that 
 $$F_{m} x_{-(n+1)} =x_{-(m+n+1)} +  q^{-\frac{2m-1}{2}}X^{(0,0,n+1,n+2)} x_{m-n-1},$$
 then we have  that
 \begin{align*}
 & F_{m+1} x_{-(n+1)} =(X_\delta F_m- X^{(0,0,1,1)}F_{m-1} ) x_{-(n+1)} \\
 =&   X_\delta (  x_{-(m+n+1)}  + q^{-\frac{2m-1}{2}}  X^{(0,0,n+1,n+2)} x_{m-n-1}  ) \\
 &- X^{(0,0,1,1)} (  x_{-(m+n)}  + q^{-\frac{2m-3}{2}}  X^{(0,0,n+1,n+2)} x_{m-n-2}  ) \\
 =&  x_{-(m+n+2)} +  X^{(0,0,1,1)}   x_{-(m+n)} +q^{-\frac{2m+1}{2}} X^{(0,0,n+1,n+2)} (x_{m-n} + X^{(0,0,1,1)} x_{m-n-2}  )\\
&  -   X^{(0,0,1,1)} ( x_{-(m+n)} +q^{-\frac{2m-3}{2}} X^{(0,0,n+1,n+2)} x_{m-n-2} ) \\
=& x_{-(m+n+2)} +  q^{-\frac{2m+1}{2}}X^{(0,0,n+1,n+2)} x_{m-n}.
\end{align*} 

(2)\ The proof is similar to (1), where we need to use Lemma \ref{first-1} and Theorem \ref{calc2}.
 \end{proof}

\begin{remark}
Since all coefficients appear in multiplication formulas are Laurent polynomials of $q$ which are independent of the choice of the finite field $\mathbb{F}_q$,  generically all multiplication formulas also hold in $\mathcal{A}_{\mathfrak{q}} (Q)$ if we replace $q$ by the indeterminate $\mathfrak{q}$.
\end{remark}

\section{Bar-invariant positive $\mathbb{Z}[ {\mathfrak{q}}^{\pm \frac{1}{2}}] [y_1^{\pm}, y_2^{\pm}]$-bases of $\mathcal{A}_{\mathfrak{q}}(Q)$}
In this section, we use multiplication formulas  established in Section 3 to construct two positive $\mathbb{Z}[ {\mathfrak{q}}^{\pm \frac{1}{2}}] [y_1^{\pm}, y_2^{\pm}]$-bases. 

\begin{definition}
An element in $\mathcal{A}_{\mathfrak{q}} (Q)$ is called positive if the coefficients of its Laurent expansion associated to any cluster belong to
$\mathbb{Z}_{\geq 0}[ {\mathfrak{q}}^{\pm \frac{1}{2}}] [y_1^{\pm}, y_2^{\pm} ]$.
\end{definition}

\begin{definition}
A basis of $\mathcal{A}_{\mathfrak{q}} (Q)$ is called a positive $\mathbb{Z}[ {\mathfrak{q}}^{\pm \frac{1}{2}}] [y_1^{\pm}, y_2^{\pm}]$-basis if its
structure constants belong to $\mathbb{Z}_{\geq 0}[ {\mathfrak{q}}^{\pm \frac{1}{2}}] [y_1^{\pm}, y_2^{\pm} ]$.
\end{definition}

Denote the following 
$$\mathcal{S}=\{ \text{cluster monomials} \} \sqcup \{ S_n | n\geq 1\}. $$

\begin{theorem}\label{positive}
The set $\mathcal{S}$ is a bar-invariant positive  $\mathbb{Z}[ {\mathfrak{q}}^{\pm \frac{1}{2}}] [y_1^{\pm}, y_2^{\pm}]$-basis of $\mathcal{A}_{\mathfrak{q}} (Q)$.
\end{theorem}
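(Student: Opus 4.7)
The plan is to verify the four defining properties of a bar-invariant positive $\mathbb{Z}[\mathfrak{q}^{\pm \frac{1}{2}}][y_1^\pm, y_2^\pm]$-basis: bar-invariance, spanning, positivity of structure constants, and linear independence.

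\emph{Bar-invariance.} Each quantum cluster monomial is bar-invariant by the general theory recalled in Section~2, and Lemma~\ref{bar-s} yields $\overline{S_n}=S_n$ for every $n$. Hence every element of $\mathcal{S}$ is bar-invariant.

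\emph{Spanning and positivity of structure constants.} I would treat these together by showing that the $\mathbb{Z}[\mathfrak{q}^{\pm \frac{1}{2}}][y_1^\pm, y_2^\pm]$-span of $\mathcal{S}$ is closed under multiplication with coefficients in $\mathbb{Z}_{\geq 0}[\mathfrak{q}^{\pm \frac{1}{2}}][y_1^\pm, y_2^\pm]$; spanning then follows because $\mathcal{S}$ contains every cluster variable and thus generates $\mathcal{A}_\mathfrak{q}(Q)$ as an algebra over the coefficient ring. Three families of pairwise products require analysis. For two cluster monomials in the same cluster, the product is itself a cluster monomial. For two cluster monomials from different clusters, iterated use of the exchange relations of Theorem~\ref{calc4} reduces the product, with non-negative coefficients, to a combination of cluster monomials within a single cluster together with terms of the form $X^{(0,0,s,t)}S_k$. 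For a cluster monomial times $S_n$, Theorem~\ref{calc2} (and Theorem~\ref{main} when convenient) combined with the commutation relations of Lemma~\ref{coef} expresses the product positively in $\mathcal{S}$. Finally, Proposition~\ref{calc1} gives $S_mS_n=\sum_{i=0}^{m}X^{(0,0,i,i)}S_{n+m-2i}$ for $n\geq m\geq 1$, and $X^{(0,0,i,i)}=\mathfrak{q}^i(y_1y_2)^i$ belongs to $\mathbb{Z}_{\geq 0}[\mathfrak{q}^{\pm \frac{1}{2}}][y_1^\pm, y_2^\pm]$.

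\emph{Linear independence.} Expand every element of $\mathcal{S}$ in the initial cluster $\{x_1,x_2\}$ via Theorem~\ref{laurent}, and grade by denominator vector in $(x_1,x_2)$. By Proposition~\ref{rep} the denominator of $S_n=X_{R(n)}$ is $(n,n)$. For a cluster monomial $x_i^ax_{i+1}^b$ the denominator equals $a\,\underline{\dim}\,V(i)+b\,\underline{\dim}\,V(i+1)$ (with $\underline{\dim}\,V(1)=\underline{\dim}\,V(2)=0$); using the explicit dimension vectors $(m,m+1)$ of preprojectives and $(m+1,m)$ of preinjectives, a short calculation shows that every such denominator either equals $(0,0)$ or lies strictly off the positive diagonal $\{(k,k):k\geq 1\}$, and that non-initial cluster monomials are determined by their denominators. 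The initial cluster monomials $x_1^ax_2^b$ share denominator $(0,0)$ but remain linearly independent as distinct quantum-torus monomials, even after scaling by elements of $\mathbb{Z}[\mathfrak{q}^{\pm \frac{1}{2}}][y_1^\pm,y_2^\pm]$. A denominator-graded inspection of any vanishing $\mathbb{Z}[\mathfrak{q}^{\pm \frac{1}{2}}][y_1^\pm,y_2^\pm]$-linear combination of elements of $\mathcal{S}$ therefore forces all coefficients to vanish.

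The main obstacle is the positivity-tracking in the spanning step, especially for products of cluster monomials drawn from clusters far apart in the mutation graph. A double induction on the mutation distance between the clusters and on the total multiplicity of the product, combined with careful bookkeeping of intermediate remainders, should handle it, verifying at each stage that the remainders continue to lie in the non-negative $\mathbb{Z}[\mathfrak{q}^{\pm \frac{1}{2}}][y_1^\pm,y_2^\pm]$-span of $\mathcal{S}$.
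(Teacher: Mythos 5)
Your proposal follows essentially the same route as the paper: bar-invariance from Lemma~\ref{bar-s} and the bar-invariance of cluster monomials, spanning and positivity of structure constants from the multiplication formulas of Proposition~\ref{calc1} and Theorems~\ref{calc2} and~\ref{calc4} together with Lemma~\ref{coef}, and linear independence via the Sherman--Zelevinsky bijection between minimal terms (equivalently, denominator vectors) and $\mathbb{Z}^2$. The ``double induction on mutation distance'' you worry about at the end is not needed, since Theorem~\ref{calc4} already covers products of cluster variables from arbitrarily distant clusters.
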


\begin{proof}
According to Proposition~\ref{calc1}, Theorem~\ref{calc2},  Theorem~\ref{calc4}, Lemma~\ref{coef} and Lemma~\ref{bar-s}, we can deduce
that the $\mathbb{Z}[ {\mathfrak{q}}^{\pm \frac{1}{2}}] [y_1^{\pm}, y_2^{\pm}]$-combinations of the elements in $\mathcal{S}$ is $\mathcal{A}_{\mathfrak{q}} (Q)$. 

In order to prove the $\mathbb{Z}[ {\mathfrak{q}}^{\pm \frac{1}{2}}] [y_1^{\pm}, y_2^{\pm}]$-independency of these elements, we  define a partial order $\leq$ on $\mathbb{Z}^2$ as follows: $(r_1,r_2) \leq (s_1,s_2)$ if $r_i \leq s_i$ for $1\leq i \leq 2$. Moreover if  $r_i < s_i$ for some $i$, we will write $(r_1,r_2)< (s_1,s_2)$. According to  Proposition \ref{calc1},  every element $s_n$ has a minimal non-zero term $a_nX^{-\underline{dim}R(n)}$ where $a_n\in \mathbb{Z}[ {\mathfrak{q}}^{\pm \frac{1}{2}}] [y_1^{\pm}, y_2^{\pm}]$. According to  Theorem \ref{calc2},   every element $x_m$ has a minimal non-zero term $b_mX^{-\underline{dim}V(m)}$ where $b_m\in\mathbb{Z}[ {\mathfrak{q}}^{\pm \frac{1}{2}}] [y_1^{\pm}, y_2^{\pm}]$.  It follows from \cite[Proposition 3.1]{SZ}, there exists a bijection between the set of all minimal non-zero terms in the elements in $\mathcal{S}$ and  $\mathbb{Z}^2$, which implies that the elements in  $\mathcal{S}$ are $\mathbb{Z}[ {\mathfrak{q}}^{\pm \frac{1}{2}}] [y_1^{\pm}, y_2^{\pm}]$-independent.

By Lemma~\ref{bar-s} and the fact that cluster monomials are bar-invariant, we know that any element in $\mathcal{S}$ is bar-invariant. Again from Proposition~\ref{calc1}, Theorem~\ref{calc2},  Theorem~\ref{calc4}, Lemma~\ref{coef} and Lemma~\ref{bar-s},   we can deduce that  the
structure constants belong to $\mathbb{Z}_{\geq 0}[ {\mathfrak{q}}^{\pm \frac{1}{2}}] [y_1^{\pm}, y_2^{\pm} ]$.
\end{proof}

\begin{remark}\label{positive-element}
Using the same arguments as \cite[Corollary 3.3.10]{KQ} \cite[Corollary 8.3.3]{fanqin1}, it follows that all elements in
$\mathcal{S}$ are positive.
\end{remark}

\begin{definition}
A basis of $\mathcal{A}_{\mathfrak{q}}(Q)$ is called the atomic $\mathbb{Z}[ {\mathfrak{q}}^{\pm \frac{1}{2}}] [y_1^{\pm}, y_2^{\pm}]$-basis if  positive elements in $\mathcal{A}_{\mathfrak{q}}(Q)$ are exactly $\mathbb{Z}_{\geq 0}[ {\mathfrak{q}}^{\pm \frac{1}{2}}] [y_1^{\pm}, y_2^{\pm} ]$-combinations of these basis elements.
\end{definition}
It follows from the definition that the atomic basis, if exists, must be a positive basis.

Denote the following 
$$\mathcal{B}=\{ \text{cluster monomials} \} \sqcup \{ F_n | n\geq 1\}. $$

\begin{theorem}\label{atomic}
The set $\mathcal{B}$ is the bar-invariant atomic  $\mathbb{Z}[ {\mathfrak{q}}^{\pm \frac{1}{2}}] [y_1^{\pm}, y_2^{\pm}]$-basis  of $\mathcal{A}_{\mathfrak{q}} (Q)$.
\end{theorem}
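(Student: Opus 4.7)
First, I would establish that $\mathcal{B}$ is a bar-invariant $\mathbb{Z}[\mathfrak{q}^{\pm 1/2}][y_1^{\pm}, y_2^{\pm}]$-basis by reducing to the basis $\mathcal{S}$ of Theorem~\ref{positive}. Comparing the recursive definitions of $F_n$ and $S_n$ (which share the same three-term recurrence for $n \geq 2$), the difference $F_n - S_n$ satisfies the same recurrence with initial data $F_1 - S_1 = 0$ and $F_2 - S_2 = -X^{(0,0,1,1)}$. Using the product formulas of Proposition~\ref{calc1} together with the centrality of $X^{(0,0,1,1)}$ on $\mathcal{S}$ from Lemma~\ref{coef}, an induction yields the closed-form change of basis
\begin{equation*}
F_n = S_n - X^{(0,0,1,1)} S_{n-2} \qquad (n \geq 2),
\end{equation*}
which is upper-triangular with unit diagonal. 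Hence $\mathcal{B}$ is a basis by Theorem~\ref{positive}, and the bar-invariance of each $F_n$ follows from the same induction as Lemma~\ref{bar-s}, or directly from the displayed formula together with Lemma~\ref{bar-s}(2).

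Second, I verify positivity of each element of $\mathcal{B}$. Cluster monomials (which include cluster variables) are positive by Remark~\ref{positive-element}. For $F_n$ and any cluster $\{x_k, x_{k+1}\}$, an appropriate instance of Theorem~\ref{main} expresses either $F_n \cdot x_{-n'}$ or $x_{n'} \cdot F_n$ (with $x_{\pm n'} \in \{x_k, x_{k+1}\}$) as a sum of cluster monomials with coefficients in $\mathbb{Z}_{\geq 0}[\mathfrak{q}^{\pm 1/2}][y_1^{\pm}, y_2^{\pm}]$. Since cluster variables themselves are positive, solving this identity for $F_n$ (division by $x_{\pm n'}$, which lies in the cluster $\{x_k,x_{k+1}\}$) yields a non-negative Laurent expansion of $F_n$ in that cluster. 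Since every cluster of the rank-two algebra $\mathcal{A}_{\mathfrak{q}}(Q)$ arises in this way, $F_n$ is positive.

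Third, I prove atomicity. The direction ``non-negative $\mathcal{B}$-combinations are positive'' follows immediately from Step~2, since sums and products of positive elements are positive. For the converse, I would adapt the Sherman--Zelevinsky leading-monomial argument to the present quantum setting with principal coefficients: each $b \in \mathcal{B}$ possesses a distinguished minimal Laurent monomial in the initial-cluster expansion (for a cluster monomial, determined by its denominator vector together with the attached principal part; for $F_n$, the monomial $X^{(n,-n,n,n)}$, arising from the $e=0$ summand of $X_{R(n)}$ in Proposition~\ref{rep} via $F_n = S_n - X^{(0,0,1,1)} S_{n-2}$), and these minimal monomials are pairwise distinct across $\mathcal{B}$. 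Therefore, in the unique expansion $z = \sum_{b \in \mathcal{B}} c_b \, b$ of a positive element $z$, each coefficient $c_b$ is recovered (up to a monomial factor in $\mathbb{Z}_{\geq 0}[\mathfrak{q}^{\pm 1/2}][y_1^{\pm}, y_2^{\pm}]$) as the Laurent coefficient of the minimal monomial of $b$ in $z$; positivity of $z$ then forces $c_b \in \mathbb{Z}_{\geq 0}[\mathfrak{q}^{\pm 1/2}][y_1^{\pm}, y_2^{\pm}]$, which is precisely the atomicity of $\mathcal{B}$.

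The main obstacle will be the verification in Step~3 that the minimal-monomial map $b \mapsto m_b$ is injective across the full basis $\mathcal{B}$ and interacts correctly with the quantum twists $\mathfrak{q}^{\Lambda(\cdot,\cdot)/2}$ attached to Laurent exponents; the relevant computations, such as $\Lambda((0,0,1,1), v) = v_1 + v_2$ and the vanishing of the Euler form on regular modules of a homogeneous tube, are already visible in the excerpt and should make the matching explicit, but the bookkeeping must be handled with care.
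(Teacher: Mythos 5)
Your Steps 1 and 2 essentially reproduce the paper's argument: the paper merely alludes to ``the relation between $S_n$ and $F_n$'', which is your identity $F_n=S_n-X^{(0,0,1,1)}S_{n-2}$, and it extracts positivity of $F_n$ from Theorem \ref{main} exactly as you do. The genuine gap is in Step 3. Pairwise distinctness of the minimal monomials $m_b$ is what proves \emph{linear independence} (this is precisely how Theorem \ref{positive} is argued), but it does not prove atomicity: the coefficient of $m_b$ in $z=\sum_{b'}c_{b'}b'$ equals $c_b a_b$ \emph{plus} the contributions of $m_b$ occurring as a non-minimal monomial of other basis elements, and such occurrences really happen. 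Concretely, the $(X_1,X_2)$-exponents occurring in $S_n$ are $(n-2e_2,\,2e_1-n)$; taking $\underline e=(0,1)$ in $F_2=S_2-X^{(0,0,1,1)}$ produces the term $(q+1)q^{c}X^{(0,-2,2,1)}$, whose $(X_1,X_2)$-exponent $(0,-2)$ is exactly the minimal monomial of the cluster monomial $x_0^2$. So the coefficient of $X_1^0X_2^{-2}$ in $z$ mixes $c_{x_0^2}$ with $c_{F_2}$, and ``$c_b$ is recovered as the Laurent coefficient of $m_b$ in $z$'' fails. The order-theoretic version of your argument only yields $c_{b_0}\ge 0$ for those $b_0$ with $m_{b_0}$ minimal in the support of $z$, and you cannot iterate because $z-c_{b_0}b_0$ need not remain positive. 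What atomicity actually requires is an \emph{exclusive} monomial for each $b$: a Laurent monomial, in a cluster adapted to $b$ (for a cluster monomial in $\{x_m,x_{m+1}\}$, the monomial $x_m^ax_{m+1}^b$ itself in that cluster, not the initial one), occurring in $b$ and in no other basis element. The paper obtains this by specializing $\mathfrak q=1$, $y_1=y_2=1$ and invoking the classical exclusivity statements of Sherman--Zelevinsky and Dupont--Thomas; some such input is unavoidable and is missing from your proposal.

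A smaller error: the minimal monomial of $F_n$ is not the $\underline e=0$ term $X^{(n,-n,n,n)}$ (that is the pointed leading term, with $(X_1,X_2)$-exponent $(n,-n)$, which is not even minimal for the componentwise order); the denominator-minimal term comes from $\underline e=(0,n)$ and is proportional to $X^{-\dimv R(n)}$, with $(X_1,X_2)$-exponent $(-n,-n)$, as stated in the proof of Theorem \ref{positive}.
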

\begin{proof}
By Theorem~\ref{positive} and the relation between $S_n$ and $F_n$, we have that $\mathcal{B}$ is a $\mathbb{Z}[ {\mathfrak{q}}^{\pm \frac{1}{2}}] [y_1^{\pm}, y_2^{\pm}]$-basis. By applying similar discussions as in Theorem~\ref{positive},we can show that the elements in $\mathcal{B}$ are bar-invariant. Note that cluster variables are positive (see also Remark \ref{positive-element}), then
by Theorem~\ref{main}, we have that $F_n$ is a positive element for  $n\geq 1$. Thus,  
all elements in $\mathcal{B}$ are positive.

Let $Y$ be a positive element in $\mathcal{A}_{\mathfrak{q}}(Q)$, then $Y=\sum_{b\in \mathcal{B}} \lambda_b b$\,
for\, some\, $ \lambda_b \in \mathbb{Z}[ {\mathfrak{q}}^{\pm \frac{1}{2}}] [y_1^{\pm}, y_2^{\pm}]$.  We need to prove $\lambda_b \in \mathbb{Z}_{\geq 0}[\displaystyle {\mathfrak{q}}^{\pm \frac{1}{2}}][y_1^{\pm}, y_2^{\pm}]$.
Firstly, we consider the case when the element $b$ is a cluster monomial. Without loss of generality,  suppose that $b$ is a cluster monomial in  the cluster $\{ x_m, x_{m+1}\}$. If  $b$ appears in the $\mathbb{Z}_{\geq 0}[\displaystyle {\mathfrak{q}}^{\pm \frac{1}{2}}][y_1^{\pm}, y_2^{\pm}]$-expansion of some other basis element $b'$ of $\mathcal{B}$ associated to the cluster $\{ x_m, x_{m+1}\}$, then $b|_{\mathfrak{q}=1,y_1=1,y_2=1}$ appears in the $\mathbb{Z}_{\geq 0}$-expansion of some other basis element $b'|_{\mathfrak{q}=1,y_1=1,y_2=1}$ associated to the cluster $\{ x_m, x_{m+1}\}|_{\mathfrak{q}=1,y_1=1,y_2=1}$ due to the positivity of the elements in $\mathcal{B}$. However this can't happen in the corresponding classical cluster algebra, for more details, refer to \cite[Section 4, Proof of
(B3) in Theorem 1.2]{DTh} or ~\cite[Section 5, Proof of
(5.10)]{SZ}. Thus by the assumption that $Y$ is  positive  associated to the cluster $\{ x_m, x_{m+1}\}$, we have  $\lambda_b \in \mathbb{Z}_{\geq 0}[\displaystyle {\mathfrak{q}}^{\pm \frac{1}{2}}][y_1^{\pm}, y_2^{\pm}]$.
Now,  we consider the case when $b=F_l$ for some $l \geq 1$.  
By using the same discussions and references as above, we can find a Laurent monomial $Y_{l}$ in certain  cluster expansion of  $F_{l}$ with coefficients  ${\mathfrak{q}}^{\frac{l_0}{2}}y^{l_1}_1y^{l_2}_2$ for some $l_0 , l_1, l_2 \in \mathbb{Z}$, but $Y_{l}$ doesn't appear in this cluster expansion of any other basis element in the above sum terms. Thus we have $\lambda_{b} {\mathfrak{q}}^{\frac{l_0}{2}}y^{l_1}_1y^{l_2}_2\in \mathbb{Z}_{\geq 0}[\displaystyle {\mathfrak{q}}^{\pm \frac{1}{2}}][y_1^{\pm}, y_2^{\pm}]$ by  using the assumption that $Y$ is  positive associated to this cluster. It follows that  $\lambda_b \in \mathbb{Z}_{\geq 0}[\displaystyle {\mathfrak{q}}^{\pm \frac{1}{2}}][y_1^{\pm}, y_2^{\pm}]$.
The proof is completed.
\end{proof}

\begin{remark}
By specializing $\mathfrak{q}=1$ in Theorems~\ref{positive} and~\ref{atomic}, we obtain a positive integral basis and the atomic basis of the classical cluster algebra associated to the Kronecker quiver with principal coefficients.  Note that  Dupont constructed these bases  \cite[Theorem 7.3, Theorem 7.4]{Dup0} in the classical Kronecker cluster algebra with opposite principal coefficients.
\end{remark}

\section*{Acknowledgments}
Ming Ding was supported by NSF of China (No. 11771217) and Fan Xu was supported by NSF of China (No. 12031007). We thank the referees for their very helpful comments and suggestions.


\begin{thebibliography}{99}

\bibitem{BCDX} L. Bai, X. Chen, M. Ding and F. Xu, \emph{Cluster multiplication theorem in the quantum cluster algebra of type $A^{(2)}_2$ and the triangular basis}, 
J. Algebra \textbf{533} (2019), 106--141.

\bibitem{BMRRT} A. Buan, R. Marsh, M. Reineke, I. Reiten and G. Todorov, \emph{Tilting theory and cluster combinatorics,}
Adv. Math. \textbf{204} (2006), 572--618.

\bibitem{berzel}
A.~Berenstein and A.~Zelevinsky, {\em Quantum cluster algebras,}
Adv. Math. \textbf{195} (2005), 405--455.

\bibitem{ck}
   P.~Caldero and B.~Keller, \emph{From triangulated categories to cluster algebras}, Invent. Math. 172 (2008)
169-211.

\bibitem{cz}
   P.~Caldero and A.~Zelevinsky, \emph{Laurent expansions in cluster algebras via quiver representations}, Mosc. Math. J. 6 (2006), 411--429. 


\bibitem{cla}
 I.~Canaki and  P.~Lampe, \emph{An expansion formula for type A and Kronecker quantum cluster algebras}, Journal of Combinatorial Theory, Series A 171 (2020) 105132.


\bibitem{CI-1} G. Cerulli Irelli, \emph{Cluster algebras of type $A_2^{(1)}$}, Algebr. Represent. Theory 15 (2012), no. 5, 977--1021.

\bibitem{CI-2} G. Cerulli Irelli, \emph{Positivity in skew-symmetric cluster algebras of finite type}, Preprint, arXiv:1102.3050. 


\bibitem{cdz} X. Chen, M. Ding and H.Zhang, \emph{Acyclic quantum cluster algebras via derived Hall algebras}, Preprint, arXiv:2108.03558 [math.RT].



\bibitem{dx}
M. Ding and F. Xu,
\emph{Bases of the quantum cluster algebra of the Kronecker quiver},
Acta Math. Sin. (Engl. Ser.) \textbf{28} (2012), no. 6, 1169--1178.


\bibitem{dx-2}
M. Ding and F. Xu,
\emph{A quantum analogue of generic bases for affine cluster algebras},
Sci. China Math. \textbf{55} (2012), no. 10, 2045--2066.


\bibitem{dxc} M.~Ding, F.~Xu and X.~Chen,
\emph{Atomic bases of quantum cluster algebras of type $\widetilde{A}_{2n-1,1}$}, Journal of Algebra 590 (2022) 1-25.


\bibitem{DR} V.~Dlab and C.~M.~Ringel, \emph{Indecomposable
representations of graphs and algebras}, Mem. Amer. Math. Soc.,
vol.~173, Amer. Math. Soc., Providence, RI, 1976.



\bibitem{DTh}
G.~Dupont and H. Thomas, \emph{Atomic bases of cluster algebras of types A and $\widetilde{A}$},
Proc. Lond. Math. Soc.  \textbf{107} (2013), no. 4, 825--850.


\bibitem{Dup0}
G.~Dupont, \emph{Quantized Chebyshev polynomials and cluster characters with coefficients}, J. Algebraic Combin. 31 (2010), no. 4, 501-532.


\bibitem{ca1}
S.~Fomin and A.~Zelevinsky, \emph{Cluster algebras. I. Foundations,}
J. Amer. Math. Soc.  \textbf{15}  (2002),  no. 2, 497--529.


\bibitem{Hubery}
A.~Hubery, \emph{Acyclic cluster algebras via Ringel-Hall algebras},
preprint (2005).
\bibitem{Hubery2}
A.~Hubery, \emph{Hall polynomials for affine quivers}, Represent.
Theory 14 (2010), 355-378.

\bibitem{KQ}
Y. Kimura and F. Qin,
\emph{Graded quiver varieties, quantum cluster algebras and dual canonical basis},
Adv. Math. \textbf{262} (2014), 261--312.


\bibitem{fanqin}
F. Qin,
\emph{Quantum cluster variables via Serre polynomials. With an appendix by Bernhard Keller},
J. Reine Angew. Math. \textbf{668} (2012), 149--190.

\bibitem{fanqin1}
F. Qin,
\emph{t-analog of q-characters, bases of quantum cluster algebras, and a correction technique}, Int. Math. Res. Not. IMRN 2014, no. 22, 6175-6232.


\bibitem{rupel-2}
D. Rupel, \emph{Quantum cluster characters for valued quivers},
Trans. Amer. Math. Soc. 367(10), (2015), 7061--7102


\bibitem{SZ}
P. Sherman and A. Zelevinsky,
\emph{Positivity and canonical bases in rank $2$ cluster algebras of finite and affine types},
Mosc. Math. J. \textbf{4} (2004), no. 4, 947--974, 982.


\bibitem{XX}
J. Xiao and F. Xu, \emph{Green's formula with $\mathbb{C}^{*}$-action and Caldero-Keller's formula for cluster algebras},
in: Representation Theory of Algebraic Groups and Quantum Groups, in: Progr. Math., vol. 284,
Birkhauser/Springer, New York, 2010: 313-348.

\bibitem{X}
 F. Xu, \emph{On the cluster multiplication theorem for acyclic cluster algebras}, Trans. Amer. Math. Soc.
362(2) (2010), 753-776.


\bibitem{z}
A. Zelevinsky, \emph{Quantum Cluster Algebras: Oberwolfach talk}, February 2005,  math.QA/0502260.


\end{thebibliography}
\end{document}